\newcommand{\norm}[1]{\|#1\|}
\newcommand{\R}{\mathbb{R}}
\newcommand{\dom}[1]{\mathrm{dom}(#1)}
\newcommand{\prox}{\mathrm{prox}}
\newcommand{\proj}{\mathrm{proj}}
\newcommand{\dist}{\mathrm{dist}}
\newcommand{\ip}[2]{\langle#1,#2\rangle}
\newtheorem{lemma}{Lemma}
\newtheorem{corollary}{Corollary}
\newtheorem{remark}{Remark}
\newtheorem{theorem}{Theorem}
\newtheorem{obs}{Observation}
\newtheorem{definition}{Definition}
\newtheorem*{proof-sketch}{Proof Sketch}
  \def\title@font{\Large\bfseries}
  \let\ltx@maketitle\@maketitle
  \def\@maketitle{\bgroup%
    \let\ltx@title\@title%
    \def\@title{\resizebox{0.98\textwidth}{!}{%
      \mbox{\title@font\ltx@title}%
    }}%
    \ltx@maketitle%
  \egroup}
\title{Three Operator Splitting with a Nonconvex Loss Function}
 \author{\name Alp Yurtsever \email{alpy@mit.edu}\\
   \name Varun Mangalick \email{varunm22@gmail.com}\\
   \name Suvrit Sra \email{suvrit@mit.edu}\\
      \addr{Massachusetts Institute of Technology}
 }
\begin{document}

\maketitle

\begin{abstract}
We consider the problem of minimizing the sum of three functions, one of which is nonconvex but differentiable, and the other two are convex but possibly nondifferentiable. 
We investigate the Three Operator Splitting method (TOS) of \citet{davis2017three} with an aim to extend its theoretical guarantees for this nonconvex problem template. 
In particular, we prove convergence of TOS with nonasymptotic bounds on its nonstationarity and infeasibility errors. 
In contrast with the existing work on nonconvex TOS, our guarantees do not require additional smoothness assumptions on the terms comprising the objective; hence they cover instances of particular interest where the nondifferentiable terms are indicator functions. 
We also extend our results to a stochastic setting where we have access only to an unbiased estimator of the gradient. Finally, we illustrate the effectiveness of the proposed method through numerical experiments on quadratic assignment problems.

\end{abstract}

\section{Introduction}
\label{sec:introduction}

We study nonconvex optimization problems of the form:
\begin{equation}
\label{eqn:model-problem}
\min_{x \in \R^n} \quad \phi(x) := f(x) + g(x) + h(x),
\end{equation}
where $f :\R^n \to \R $ is continuously differentiable and potentially nonconvex, whereas $g$ and $h : \R^n \to \R \cup \{+\infty\}$ are proper lower-semicontinuous convex functions (potentially nonsmooth). Further, we assume that the domain of $g$, that is, $\dom{g} = \{x\in \R^n : g(x) < +\infty\}$, is bounded. 

Template \eqref{eqn:model-problem} enjoys a rich number of applications in optimization, machine learning, and statistics. Nonconvex losses arise naturally in several maximum likelihood estimation~\citep{mclachlan96} and M-estimation problems~\citep{ollila2014regularized,maronna2019robust}, in problems with a matrix factorization structure~\citep{zass2007nonnegative}, in certain transport and assignment problems~\citep{koopmans1957assignment,peyre2019computational}, among countless others. The nonsmooth terms in~\eqref{eqn:model-problem} can be used as regularizers, \emph{e.g.}, to promote joint behavior such as sparsity and low-rank~\citep{richard2012estimation}. Moreover, we can also split a complex regularizer into simpler terms for computational advantages, \emph{e.g.}, in group lasso with overlaps~\citep{jacob2009group}, structured sparsity~\citep{el2015totally}, or total variation \citep{barbero2018modular}. 

We obtain an important special case by choosing the nonsmooth terms $g$ and $h$ in \eqref{eqn:model-problem} as indicator functions of closed and convex sets $\mathcal{G}$ and $\mathcal{H} \subseteq \R^n$. In this case, \eqref{eqn:model-problem} turns into
\begin{equation}
\label{eqn:model-problem-constraint}
\min_{x \in \R^n} \quad f(x) \quad \text{subject to}\ 
\quad x \in \mathcal{G} \cap \mathcal{H}.
\end{equation}
We are particularly interested in the setting where $\cal G$ and $\cal H$ are \emph{simple} in the sense that we can project onto these sets efficiently, but not so easily onto their intersection. Some examples include learning with correlation matrices~\citep{higham2016anderson}, power assignment in wireless networks \citep{de2010traveling}, graph transduction \citep{shivanna2015spectral}, graph matching \citep{zaslavskiy2008path}, and quadratic assignment \citep{koopmans1957assignment,loiola2007survey}.

An effective way to solve~\eqref{eqn:model-problem} for convex $f$ with Lipschitz gradients is the Three Operator Splitting (TOS) method~\citep{davis2017three}, whose convergence has been well-studied (see \S\ref{sec:related}). But for nonconvex $f$, convergence properties of TOS are less understood (again, see~\S\ref{sec:related}). This gap motivates us to develop nonasymptotic convergence guarantees for TOS. Beyond theoretical progress, we highlight   the potential empirical value of TOS by evaluating it on a challenging nonconvex problem, the quadratic assignment problem (QAP).

\iffalse \textbf{Outline.} 
\Cref{sec:related-works} gives an overview of the related work. 
\Cref{sec:approximate-stationarity} includes a discussion on suitable measures of approximate stationarity for the model problem. 
\Cref{sec:algorithm} presents the algorithm and our main results on theoretical convergence guarantees of TOS and the extensions for the stochastic setting where we have access only to an unbiased estimator of the gradient of $f$. 
\Cref{sec:applications} describes the quadratic assignment problem as an important application of \eqref{eqn:model-problem}, and \Cref{sec:experiments} demonstrates the empirical performance of TOS on this problem.
Finally, \Cref{sec:conclusion} draws the concluding remarks. 
We defer the proofs and technical details to the supplementary material. 
\clearpage
\fi 

\textbf{Contributions.} We summarize our contributions towards the convergence analysis of nonconvex TOS below. \begin{itemize}[itemsep=0.5em,topsep=0pt,parsep=0pt,partopsep=0pt,leftmargin=*]
\item[$\triangleright$] We first discuss how to quantify convergence of TOS to first-order stationary points for both templates~\eqref{eqn:model-problem} and \eqref{eqn:model-problem-constraint}. Specifically, we propose to measure approximate stationarity based on a variational inequality. Thereafter, we prove that the associated non-stationarity error is smaller than $\epsilon$ (in expectation over a random iteration counter) after $T = \mathcal{O}(1/\epsilon^3)$ iterations (and gradient evaluations).
\item[$\triangleright$] We extend our analysis to stochastic optimization where we have access only to an unbiased estimate of the gradient $\nabla f$. In this case, we prove that the error is smaller than $\epsilon$ (in expectation) after $T = \mathcal{O}(1/\epsilon^3)$ iterations. The corresponding algorithm requires drawing $\mathcal{O}(1/\epsilon^5)$ \emph{i.i.d.}\ stochastic gradients.
\end{itemize}
Finally, we evaluate TOS on the quadratic assignment problem using the well-known QAPLIB benchmark library~\citep{burkard1997qaplib}. Remarkably, TOS performs significantly better than the theory suggests: we find that it converges locally linearly. Understanding this behavior could be a potentially valuable question for future study.

 \subsection{Related Works}
\label{sec:related}
\citet{davis2017three} introduce TOS for solving the monotone inclusion of three operators, one of which is co-coercive. TOS gives us a simple algorithm for~\eqref{eqn:model-problem} when $f$ is smooth and convex, since the gradient of a smooth convex function is co-coercive. At each iteration, TOS evaluates the gradient of $f$ and the proximal operators of $g$ and $h$ once, separately. 
TOS extends various previous operator splitting schemes such as the forward-backward splitting, Douglas-Rachford splitting, Forward-Douglas-Rachford splitting~\citep{briceno2015forward}, and the Generalized Forward-Backward splitting~\citep{raguet2013generalized}. 

The original algorithm of \citet{davis2017three} requires knowledge of the smoothness constant of $f$; \citet{pedregosa2018adaptive} introduce a variant of TOS with backtracking line-search that bypasses this restriction. \citet{zong2018convergence} analyze convergence of TOS with inexact oracles where both the gradient and proximity oracles can be noisy. 

Existing work on TOS applied to nonconvex problems limits itself to the setting where at least two terms in~ \eqref{eqn:model-problem} have Lipschitz continuous gradients. Under this assumption, \citet{liu2019envelope} identify an envelope function for TOS, which permits one to interpret TOS as gradient descent for this envelope  under a variable metric. Their envelope generalizes the well-known Moreau envelope as well as the envelopes for Douglas-Rachford and Forward-Backward splitting introduced in~\citep{patrinos2014douglas} and \citep{themelis2018forward}. 

\citet{bian2020three} present convergence theory for TOS under the same smoothness assumptions. They show that the sequence generated by TOS with a carefully chosen step-size converges to a stationary point of \eqref{eqn:model-problem}. They also prove asymptotic convergence rates under the assumption that the Kurdyka-\L{}ojasiewicz property holds (see Definition~2.3 in \citep{bian2020three}). 

Our focus is significantly different from these prior works on nonconvex TOS. In contrast to the settings of \citep{liu2019envelope} and \citep{bian2020three}, we do \emph{not} impose any assumption on the smoothness of $g$ and $h$. However, we do assume that the nonsmooth terms $g$ and $h$ are convex and the problem domain is bounded. 

In particular, our setting includes nonconvex minimization over the intersection of two simple convex sets, which covers important applications such as the quadratic assignment problem and graph matching. Note that these problems are challenging for TOS even in the convex setting, because the intermediate estimates of TOS can be infeasible and the known guarantees on the convergence rate of TOS fail, see the discussion in Section~3.2 in \citep{pedregosa2018adaptive}.

Finally, \citet{yurtsever2016stochastic}, \citet{cevher2018stochastic}, \citet{zhao2018stochastic}, and \citet{pedregosa2019proximal} propose and analyze stochastic variants of TOS and related methods in the convex setting. We are unaware of any prior work on nonconvex stochastic TOS. 

\vspace{0.5em}

\textbf{Notation.} Before moving onto the theoretical development, let us summarize here key notation used throughout the paper. We use $\ip{\cdot}{\cdot}$ to denote the standard Euclidean inner product associated with the norm $\norm{\cdot}$. The distance between a point $x \in \R^n$ and a set $\mathcal{G} \subseteq \R^n$ is defined as $\dist(x,\mathcal{G}) := \inf_{y \in \mathcal{G}} \norm{x - y}$; the projection of $x$ onto $\mathcal{G}$ is given by $\proj_{\mathcal{G}}(x) := \arg\min_{y \in \mathcal{G}} \norm{x - y}$. We denote the indicator function of $\mathcal{G}$ by $\iota_{\mathcal{G}}: \R^n \to \{0,+\infty\}$, that takes $0$ for any $x \in \mathcal{G}$ and $+\infty$ otherwise. 
The proximal operator (or prox-operator) of a function $g: \R^n \to \R \cup \{+\infty\}$ is defined by $\prox_{g}(x) := \arg\min_{y \in \R^n} \{ g(y) + \frac{1}{2} \norm{x - y}^2 \}$. 
Recall that the prox-operator for the indicator function is the projection, \emph{i.e.}, $\prox_{\iota_{\mathcal{G}}}(x) = \proj_{\mathcal{G}}(x)$.

\section{Basic Setup: Approximate Stationarity}
\label{sec:approximate-stationarity}
We begin our analysis by setting up the notion of approximate stationarity that we will use to judge convergence.  For unconstrained minimization of smooth functions, gradient norm is a widely used standard measure. But the gradient norm is unsuitable in our case because of the presence of constraints and nonsmooth terms in the cost. 

Related work on operator splitting for nonconvex optimization typically considers the norm of a proximal gradient, or uses some other auxiliary differentiable function that converges to zero as we get closer to a first-order stationary point. See, for instance, the \emph{envelope functions} introduced by \citet{patrinos2014douglas}, \citet{themelis2018forward} and \citet{liu2019envelope}, or the \textit{energy function} defined by \citet{bian2020three}. However, these functions can characterize stationary points of \eqref{eqn:model-problem} only under additional smoothness assumptions on $g$ and $h$. They fail to capture important applications where both $g$ and $h$ are nonsmooth. 

In contrast, we consider a simple measure based on the variational inequality characterization of first-order stationarity.

\begin{definition}[Stationary point]
$\bar{z} \in \mathrm{dom}(\phi)$ is a first-order stationary point of \eqref{eqn:model-problem} if,~ for all $x \in \mathrm{dom}(\phi)$,
\begin{align}
\label{eqn:stationarity}
    \ip{\nabla f(\bar{z})}{\bar{z} - x} + g(\bar{z}) - g(x) + h(\bar{z}) - h(x) \leq 0.
\end{align}
See \Cref{lem:stationary-point} in the supplementary material for the technical details on condition \eqref{eqn:stationarity}. 
\end{definition}

We consider a perturbation of the bound in \eqref{eqn:stationarity} to define an approximately stationary point.

\begin{definition}[$\epsilon$-stationary point]
\label{def:epsilon-stationary}
We say $\bar{z} \in \mathrm{dom}(\phi)$ is an $\epsilon$-stationary point of \eqref{eqn:model-problem} if,~ for all $x \in \mathrm{dom}(\phi)$,
\begin{align}
\label{eqn:epsilon-stationarity}
    \ip{\nabla f(\bar{z})}{\bar{z} - x} + g(\bar{z}) - g(x) + h(\bar{z}) - h(x) \leq \epsilon.
\end{align}
\end{definition}
This is a natural extension of the notion of suboptimal solutions in terms of function values used in convex optimization. 
Similar measures for stationarity appear in the literature for various problems; see \emph{e.g.},~\citep{he2015non,nouiehed2019solving,malitsky2019golden,song2020optimistic}.

TOS is particularly advantageous for \eqref{eqn:model-problem} when the proximal operators of $g$ and $h$ are easy to evaluate separately but the proximal operator of their sum is difficult. For \eqref{eqn:model-problem-constraint}, this corresponds to optimization over $\mathcal{G} \cap \mathcal{H}$ by using only projections onto the individual sets and not onto their intersection. In this setting, we can achieve a feasible solution only in an asymptotic sense. Finding a \emph{feasible} $\epsilon$-stationary solution is an unrealistic goal. Thus, for \eqref{eqn:model-problem-constraint}, we consider a relaxation of \Cref{def:epsilon-stationary} that permits approximately feasible solutions. 

\begin{definition}[$\omega$-feasible $\epsilon$-stationary point]
\label{def:epsilon-feasible-stationary}
We say $\bar{z} \in \mathcal{G}$ is an $\omega$-feasible $\epsilon$-stationary point of \eqref{eqn:model-problem-constraint} if
\begin{gather}
\dist(\bar{z},\mathcal{H}) \leq \omega, \quad  \text{and} \label{eqn:infeasible-closeness} \\[0.25em]
        \ip{\nabla f(\bar{z})}{\bar{z} - x} \leq \epsilon, \quad \forall x \in \mathcal{G} \cap \mathcal{H}.\label{eqn:infeasible-stationarity}
\end{gather}

\end{definition}
\begin{remark}
For simplicity, we measure infeasibility of $\bar{z}$ via $\dist(\bar{z},\mathcal{H})$. This is suitable because the estimates of TOS remain in $\mathcal{G}$ by definition. We can also consider a slightly stronger notion of approximate feasibility given by $\dist(\bar{z},\mathcal{G} \cap \mathcal{H})$. 
However, this requires additional regularity conditions on $\mathcal{G}$ and $\mathcal{H}$ to avoid pathological examples. See, for instance, Lemma~1 in \citep{hoffmann1992distance} or Definition~2 in \citep{kundu2018convex}.
\end{remark}

The directional derivative condition  \eqref{eqn:infeasible-stationarity} is often used in the analysis of conditional gradient methods, and it is known as the \emph{Frank-Wolfe gap} in this literature. See \citep{jaggi2013revisiting,lacoste2016convergence,reddi2016stochastic,yurtsever2019conditional} for some examples. 

Approximately feasible solutions are widely considered in the analysis of primal-dual methods (but usually in the convex setting), see \citep{yurtsever2018conditional,kundu2018convex} and the references therein.
Remark that TOS can also be viewed as a primal-dual method \citep{pedregosa2018adaptive}. 

Problem~\eqref{eqn:model-problem-constraint} is challenging for TOS because of the infeasibility of the intermediate estimates, even when $f$ is convex. \citet{davis2017three} avoid this issue by evaluating the terms $h$ and $(f + g)$ at two different points, $x \in \mathcal{H}$ and $z\in \mathcal{G}$. 

However, $\bigl(f(z) + g(z)\bigr) + h(x)$ can be equal to the optimal objective value even when neither $x$ nor $z$ is close to a solution. 
We can address this issue by introducing a condition on the distance between $x$ and $z$. 
The following definition of an $\alpha$-close and $\beta$-stationary pair of points is crucial for our analysis. 

\begin{definition}[$\alpha$-close $\beta$-stationary pair]
\label{def:close-stationary-pair}
We say that $(\bar{x},\bar{z}) \in \dom{h} \times \dom{g}$ are $\alpha$-close and $\beta$-stationary points of \eqref{eqn:model-problem} if,~ for all $x \in \dom{\phi}$,
\begin{gather}
    \norm{\bar{z} - \bar{x}} \leq \alpha, \label{eqn:alpha-closeness}
\qquad \text{and} \\[0.5em]
    \ip{\nabla f(\bar{z})}{\bar{x} - x} + g(\bar{z}) - g(x) + h(\bar{x}) - h(x) \leq \beta. \label{eqn:beta-stationarity}
\end{gather}
\end{definition}
$\alpha$-close $\beta$-stationary points $(\bar{x},\bar{z})$ yield approximate solutions to \eqref{eqn:model-problem} and \eqref{eqn:model-problem-constraint} under appropriate assumptions. 

\begin{obs}\label{rmk:close-stationary-pair}
\textbf{(i).} Let $h$ be Lipschitz continuous on $\R^n$ with constant $L_h$. Assume that $\norm{\nabla f(z)}$ is bounded by $G_f$ for all $x \in \dom{g}$. Suppose that the points $(\bar{x},\bar{z})$ are $\alpha$-close and $\beta$-stationary. Then, $\bar{z}$ is an $\epsilon$-stationary point with $\epsilon = \alpha(G_f+L_h) + \beta$ as per \Cref{def:epsilon-stationary}. 

\textbf{(ii).} Let $g$ and $h$ be indicators of closed convex sets $\mathcal{G}$ and $\mathcal{H}$ respectively. Assume that $\norm{\nabla f(z)}$ is bounded by $G_f$ for all $x \in \mathcal{G}$. Suppose that the points $(\bar{x},\bar{z}) \in \mathcal{H} \times \mathcal{G}$ are $\alpha$-close and $\beta$-stationary. Then, $\bar{z}$ is an $\alpha$-feasible $\epsilon$-stationary point with $\epsilon = \alpha G_f + \beta$ as per \Cref{def:epsilon-feasible-stationary}. \end{obs}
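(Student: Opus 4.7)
The strategy is straightforward: in both parts, the gap between the $\beta$-stationarity condition from \Cref{def:close-stationary-pair} and the $\epsilon$-stationarity conditions of \Cref{def:epsilon-stationary} or \Cref{def:epsilon-feasible-stationary} only involves replacing $\bar{x}$ with $\bar{z}$ in the inner-product slot and in the $h$-term. So the plan is to write the target inequality, add and subtract the terms that appear in the $\beta$-stationarity bound, and control the resulting difference using the $\alpha$-closeness of $(\bar{x},\bar{z})$ together with the boundedness of $\nabla f$ and (for part (i)) the Lipschitz continuity of $h$.

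For part (i), I would fix an arbitrary $x \in \dom{\phi}$ and start with the identity
\begin{align*}
\ip{\nabla f(\bar{z})}{\bar{z}-x} + g(\bar z) - g(x) + h(\bar z) - h(x)
&= \bigl[\ip{\nabla f(\bar z)}{\bar x - x} + g(\bar z) - g(x) + h(\bar x) - h(x)\bigr]\\
&\quad + \ip{\nabla f(\bar z)}{\bar z - \bar x} + h(\bar z) - h(\bar x).
\end{align*}
The bracketed quantity is at most $\beta$ by $\beta$-stationarity. For the remaining piece, Cauchy--Schwarz together with $\|\nabla f(\bar z)\| \le G_f$ (valid since $\bar z \in \dom g$) gives $|\ip{\nabla f(\bar z)}{\bar z - \bar x}| \le G_f \norm{\bar z - \bar x} \le G_f \alpha$, and the $L_h$-Lipschitz continuity of $h$ gives $|h(\bar z) - h(\bar x)| \le L_h \norm{\bar z - \bar x} \le L_h \alpha$. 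Summing yields the claimed bound $\epsilon = \alpha(G_f + L_h) + \beta$.

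For part (ii), with $g = \iota_{\mathcal G}$ and $h = \iota_{\mathcal H}$, the domain $\dom \phi$ is $\mathcal G \cap \mathcal H$, so for any $x \in \mathcal G \cap \mathcal H$ all four indicator terms in \eqref{eqn:beta-stationarity} vanish (using $\bar z \in \mathcal G$ and $\bar x \in \mathcal H$), and the condition collapses to $\ip{\nabla f(\bar z)}{\bar x - x} \le \beta$. The same decomposition as above then reduces to $\ip{\nabla f(\bar z)}{\bar z - x} \le \ip{\nabla f(\bar z)}{\bar z - \bar x} + \beta \le G_f \alpha + \beta$, which is \eqref{eqn:infeasible-stationarity} with $\epsilon = \alpha G_f + \beta$. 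Feasibility comes for free: since $\bar x \in \mathcal H$, we have $\dist(\bar z, \mathcal H) \le \norm{\bar z - \bar x} \le \alpha$, establishing \eqref{eqn:infeasible-closeness} with $\omega = \alpha$.

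There is no real obstacle here — the argument is purely a triangle-type splitting followed by Cauchy--Schwarz and Lipschitz estimates. The only thing to be careful about is verifying that the bound $\norm{\nabla f(\bar z)} \le G_f$ applies at the specific point $\bar z$ (which it does, because $\bar z \in \dom g$ in (i) and $\bar z \in \mathcal G$ in (ii)) and that all indicator terms in the $\beta$-stationarity inequality are well-defined (finite) at $\bar z$, $\bar x$, and $x$ in the constrained setting.
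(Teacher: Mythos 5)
Your proof is correct and is essentially the paper's own argument: the add-and-subtract decomposition you use is just a rearranged form of the paper's two inequalities bounding $h(\bar{x})-h(x)$ and $\ip{\nabla f(\bar{z})}{\bar{x}-x}$ via Lipschitz continuity and Cauchy--Schwarz, followed by the same feasibility estimate $\dist(\bar{z},\mathcal{H}) \leq \norm{\bar{z}-\bar{x}} \leq \alpha$ in part (ii). No gaps.
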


\begin{proof}
\textbf{\textit{(i).}} Since $h$ is Lipschitz, we have 
\begin{align}\label{eqn:obs-1}
    h(\bar{x}) - h(x) \geq  h(\bar{z}) - h(x) - L_h \norm{\bar{z} - \bar{x}}.
\end{align}
And since $\norm{\nabla f(z)}$ is bounded, we have
\begin{align}\label{eqn:obs-2}
    \ip{\nabla f(\bar{z})}{\bar{x} - x} \geq \ip{\nabla f(\bar{z})}{\bar{z} - x} - G_f \norm{\bar{z} - \bar{x}}.
\end{align}
We get \eqref{eqn:epsilon-stationarity} with  $\epsilon = \alpha(G_f+L_h) + \beta$ by using \eqref{eqn:obs-1} and \eqref{eqn:obs-2} in \eqref{eqn:beta-stationarity} and bounding $\norm{\bar{z} - \bar{x}}$ by \eqref{eqn:alpha-closeness}.

\textbf{\textit{(ii).}} We get \eqref{eqn:infeasible-closeness} with $\omega = \alpha$ since
\begin{align}
    \dist(\bar{z},\mathcal{H}) = \inf_{x \in \mathcal{H}} \norm{\bar{z}- x} \leq \norm{\bar{z}-\bar{x}} .
\end{align}
$h(\bar{x}) = g(\bar{z}) = h(x) = g(x) = 0$ since $\bar{x} \in \mathcal{H}$, $\bar{z} \in \mathcal{G}$, and $x \in \mathcal{G} \cap \mathcal{H}$. Then, \eqref{eqn:infeasible-stationarity} follows from \eqref{eqn:beta-stationarity} by using \eqref{eqn:obs-2}. 
\end{proof}

\vspace{-0.5em}
We are now ready to present and analyze the algorithm.

\section{TOS with a Nonconvex Loss Function}
\label{sec:algorithm}

This section establishes convergence guarantees of TOS for solving Problems \eqref{eqn:model-problem} and \eqref{eqn:model-problem-constraint}. The method is detailed in \Cref{alg:three-operator-splitting}.

\begin{algorithm}[tb]
\setstretch{1.15}
   \caption{Three Operator Splitting (TOS)}
   \label{alg:three-operator-splitting}
\begin{algorithmic}
\vspace{0.25em}
   \STATE {\bfseries Input:} Initial point $y_1 \in \mathbb{R}^n$, step-size sequence $\{\gamma_t\}_{t=1}^T$ \\[0.25em]
   \FOR{$t=1,2,\ldots,T$}
   \STATE $z_{t} = \prox_{\gamma_t g} (y_t)$
   \STATE $x_{t} = \prox_{\gamma_t h} (2z_t - y_t - \gamma_t \nabla f(z_t))$
\STATE $y_{t+1} = y_t - z_t + x_t $
   \ENDFOR\\[0.25em]
   \STATE {\bfseries Return:} Draw $\tau$ uniformly at random from $\{1,2,\ldots, T\}$ and output ${z}_\tau$.
   \vspace{0.25em}
\end{algorithmic}
\end{algorithm}

\begin{theorem} \label{thm:TOS-Lipschitz}
Consider Problem~\eqref{eqn:model-problem} under the following assumptions:\\
\emph{(i)}~The domain of $g$ has finite diameter $D_g$,
\begin{equation*}
    \norm{x-y} \leq D_g, \quad \forall x, y \in \dom{g}.
\end{equation*}
\emph{(ii)}~$g$ is $L_g$-Lipschitz continuous on its domain, 
\begin{equation*}
    g(x) - g(y) \leq L_g \norm{x-y}, \quad \forall x, y \in \dom{g}.  
\end{equation*}
\emph{(iii)}~The gradient of $f$ is bounded by $G_f$ on the domain of $g$,
\begin{equation*}
    \norm{\nabla f(x)} \leq G_f, \quad \forall x \in \dom{g}.
\end{equation*}
\emph{(iv)}~$h$ is $L_h$-Lipschitz continuous on $\R^n$,
\begin{equation*}
    h(x) - h(y) \leq L_h \norm{x-y}, \quad \forall x, y \in \R^n.
\end{equation*}
Choose $y_1 \in \dom{g}$. 
Then, $z_\tau$ returned by TOS (\Cref{alg:three-operator-splitting}) after $T$ iterations with the fixed step-size $\gamma_t = \gamma = \frac{D_g}{2 (G_f + L_g + L_h) T^{2/3}}$ satisfies, for all $x$ in $\dom{g}$,
\begin{align}
 \label{eqn:theorem-TOS-Lipschitz}
    \mathbb{E}_{\tau}&[\ip{\nabla f (z_\tau)}{z_\tau - x} + g(z_\tau) - g(x) +  h(z_\tau) - h(x)] \leq \frac{4D_{g}(G_f + L_g + L_h)}{T^{1/3}}.
\end{align}
\end{theorem}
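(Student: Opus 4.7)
My plan is to first derive a summed variational inequality for the pair $(x_t,z_t)$ via the prox optimality conditions in TOS, and then convert the pair quantity into the single-point quantity at $z_\tau$ using Lipschitz continuity of $h$, boundedness of $\nabla f$, and a Young-type inequality. The resulting estimate balances a $D_g^2/(\gamma T)$ term coming from the initial residual against a $\gamma(G_f+L_h)^2$ term coming from the pair-to-single-point conversion; inserting the prescribed step-size $\gamma=D_g/[2(G_f+L_g+L_h)T^{2/3}]$ at the end produces the $\mathcal{O}(1/T^{1/3})$ rate in \eqref{eqn:theorem-TOS-Lipschitz}.

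For the pair variational inequality I would read the subgradient inclusions off the two prox steps: $y_t-z_t\in\gamma\partial g(z_t)$ and, after substituting $x_t=y_{t+1}-y_t+z_t$ from the $y$-update, $z_t-y_{t+1}-\gamma\nabla f(z_t)\in\gamma\partial h(x_t)$. Convexity of $g$ and $h$ then yields, for any $x\in\dom{\phi}$, the inequalities $\gamma(g(z_t)-g(x))\leq\ip{y_t-z_t}{z_t-x}$ and $\gamma(h(x_t)-h(x))\leq\ip{z_t-y_{t+1}-\gamma\nabla f(z_t)}{x_t-x}$. Summing and adding $\gamma\ip{\nabla f(z_t)}{x_t-x}$ on both sides, so as to synthesize the pair quantity from \eqref{eqn:beta-stationarity}, cancels the $\gamma\nabla f$ contribution and leaves two bilinear cross terms. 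Using $y_{t+1}-y_t=x_t-z_t$, routine algebra collapses these to $-\tfrac{1}{2}\|z_t-x_t\|^2+\tfrac{1}{2}(\|y_t-x\|^2-\|y_{t+1}-x\|^2)$; telescoping over $t=1,\ldots,T$ and using $\|y_1-x\|\leq D_g$ (valid since $y_1$ and $x$ both lie in $\dom{g}$) produces $\gamma\sum_{t=1}^T\beta_t(x)+\tfrac{1}{2}\sum_{t=1}^T\|z_t-x_t\|^2\leq D_g^2/2$, where $\beta_t(x)$ denotes the pair quantity in \eqref{eqn:beta-stationarity}.

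To pass from the pair quantity to the target expression in \eqref{eqn:theorem-TOS-Lipschitz}, Lipschitz continuity of $h$ on $\R^n$ together with $\|\nabla f(z_t)\|\leq G_f$ gives $\ip{\nabla f(z_t)}{z_t-x}+g(z_t)-g(x)+h(z_t)-h(x)\leq\beta_t(x)+(G_f+L_h)\|z_t-x_t\|$ per iterate. A Young inequality $\gamma(G_f+L_h)\|z_t-x_t\|\leq\tfrac{1}{2}\|z_t-x_t\|^2+\tfrac{\gamma^2(G_f+L_h)^2}{2}$ then lets the $\tfrac{1}{2}\|z_t-x_t\|^2$ piece absorb the matching term on the left of the telescoping bound, so that summing and dividing by $\gamma T$ yields an upper bound on $\mathbb{E}_\tau$ of the quantity in \eqref{eqn:theorem-TOS-Lipschitz} of the form $D_g^2/(2\gamma T)+\gamma(G_f+L_h)^2/2$. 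Substituting the prescribed step-size and loosely bounding $G_f+L_h\leq G_f+L_g+L_h$ and $T^{-2/3}\leq T^{-1/3}$ recovers the $4D_g(G_f+L_g+L_h)/T^{1/3}$ estimate. The step I expect to be the trickiest is the algebraic collapse of the two bilinear cross terms into $-\tfrac{1}{2}\|z_t-x_t\|^2+\tfrac{1}{2}(\|y_t-x\|^2-\|y_{t+1}-x\|^2)$: this is where the $y$-update is really used, and it identifies $\|y_t-x\|^2$ as the natural Lyapunov quantity that makes the telescoping go through.
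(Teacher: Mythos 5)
Your proposal is correct, and its first stage (the prox optimality conditions, the algebraic collapse to $-\tfrac{1}{2}\norm{z_t-x_t}^2+\tfrac{1}{2}(\norm{y_t-x}^2-\norm{y_{t+1}-x}^2)$, and the telescoping with $\norm{y_1-x}\leq D_g$) is exactly the paper's key lemma (\Cref{lem:main-lemma-nonconvex-TOS}). Where you genuinely diverge is the second stage. The paper keeps the averaged $\tfrac{1}{2\gamma}\mathbb{E}_\tau[\norm{x_\tau-z_\tau}]^2$ term intact, lower-bounds the single-point quantity crudely by $-(G_f+L_g+L_h)D_g$ using assumptions \emph{(i)}--\emph{(iv)}, and solves the resulting second-order inequality to get an explicit bound $\mathbb{E}_\tau[\norm{z_\tau-x_\tau}]\leq D_g(T^{-2/3}+T^{-1/2}+T^{-1/3})$, which it then plugs back in. You instead absorb the conversion cost per iterate via Young's inequality, $\gamma(G_f+L_h)\norm{z_t-x_t}\leq\tfrac{1}{2}\norm{z_t-x_t}^2+\tfrac{\gamma^2(G_f+L_h)^2}{2}$, so the quadratic term from the telescope cancels exactly and you land on $\frac{D_g^2}{2\gamma T}+\frac{\gamma(G_f+L_h)^2}{2}$, which with the prescribed step-size gives roughly $\tfrac{5}{4}D_g(G_f+L_g+L_h)T^{-1/3}\leq 4D_g(G_f+L_g+L_h)T^{-1/3}$. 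Your route is simpler (no quadratic-inequality solving, and $L_g$ enters only through the step-size constant) and yields a slightly tighter constant for \eqref{eqn:theorem-TOS-Lipschitz}. What the paper's route buys in exchange is an explicit proximity estimate on $\mathbb{E}_\tau[\norm{z_\tau-x_\tau}]$, i.e., the $\alpha$-closeness of the pair in the sense of \Cref{def:close-stationary-pair}; that quantity is not needed for the statement you were asked to prove, but it is what drives the infeasibility bound $\mathbb{E}_\tau[\dist(z_\tau,\mathcal{H})]$ in \Cref{thm:TOS-indicator}, so your shortcut would not by itself cover the constrained case.
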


\begin{proof}[Proof sketch]
We start by writing the optimality conditions for the proximal steps on $x_t$ and $z_t$. 
Through algebraic modifications, we show that, for all $x \in \dom{\phi}$, 
\begin{align} 
\label{eqn:proof-sketch-step-1}
\begin{aligned}
\ip{\nabla f (z_t)}{x_t - x} + g(z_t) - g(x) + h(x_t) - h(x)  \leq \frac{1}{2\gamma} \Big( \norm{y_t - x}^2 - \norm{y_{t+1} - x}^2 - \norm{x_t - z_t}^2 \Big).
\end{aligned}
\end{align}
We take the average of this inequality over $t = 1,2,\ldots T$. The inverted terms with $y_t$ and $y_{t+1}$ cancel out since the step-size is fixed. As a result, we know that $(x_\tau,z_\tau)$ satisfy \eqref{eqn:beta-stationarity} with $\beta = {D_g^2} / ({2 \gamma T})$ in expectation. 

We also need to show that $(x_\tau,z_\tau)$ satisfy the proximity condition \eqref{eqn:alpha-closeness}.
To this end, we extend \eqref{eqn:proof-sketch-step-1} as 
\begin{align} \label{eqn:proof-sketch-step-2}
\begin{aligned}
- (G_f + L_h)\norm{x_t - z_t} - (G_f + L_g + L_h) D_g \leq \frac{1}{2\gamma} \Big( \norm{y_t - x}^2 - \norm{y_{t+1} - x}^2 - \norm{x_t - z_t}^2 \Big),
\end{aligned}
\end{align}
by using the boundedness of the domain \emph{(i)}, boundedness of the gradient norm  \emph{(iii)}, and Lipschitz continuity of $g$ and~$h$  \emph{(ii, \,iv)}. Again, we take the average over $t$ and eliminate the inverted terms. This leads to a second order inequality of $\mathbb{E}_\tau [\norm{z_\tau - x_\tau}]$. By solving this inequality, we get an upper bound on $\mathbb{E}_\tau [\norm{z_\tau - x_\tau}]$ in terms of the problem constants $G_f,L_g,L_h,D_g$, total number of iterations $T$, and step-size $\gamma$. By choosing $\gamma$ carefully, we ensure that $(x_\tau,z_\tau)$ are close and approximately stationary as per \Cref{def:close-stationary-pair}. We complete the proof by using \Cref{rmk:close-stationary-pair} \emph{(i)}. 
\end{proof}

Our proof is a nontrivial extension of the convergence guarantees of TOS to the nonconvex problems. The prior analysis for the convex setting is based on a fixed point characterization of TOS and on Fej\'{e}r monotonicity of $\norm{y_t - y_\star}$, where $y_\star$ denotes the fixed point of TOS, see Proposition~2.1 in \citep{davis2017three}. Unfortunately. this desirable feature is lost when we drop the convexity of $f$. 
Our approach of proving proximity between $x_\tau$ and $z_\tau$ via second-order inequality \eqref{eqn:proof-sketch-step-2} is nonstandard.

\begin{remark} \label{rmk:TOS-Lipschitz}
We highlight several points about \Cref{thm:TOS-Lipschitz}: \\[0.25em]
\emph{1.}~When $D_g, G_f, L_g,$ or $L_h$ is not known, one can use $\gamma_t = \frac{\gamma_0}{T^{2/3}}$ for any $\gamma_0 > 0$. The convergence rate in \eqref{eqn:theorem-TOS-Lipschitz} still holds but with different constants. We chose the specific step-size in \Cref{thm:TOS-Lipschitz} in order to simplify the bounds. \\[0.25em]
\emph{2.}~Assumption \emph{(iii)} holds automatically if $f$ is smooth since $\dom{g}$ is bounded. \\[0.25em]
\emph{3.}~We can relax assumption \emph{(iv)} as follows: $h$ is $L_h$-Lipschitz continuous on $\dom{h}$, and $\dom{h} \supseteq \dom{g}$. \\[0.25em]
\emph{4.}~We can slightly tighten the constants in \eqref{eqn:theorem-TOS-Lipschitz}. We defer the details to the supplementary material. \\[0.25em]
\emph{5.}~Our guarantees hold in expectation for the estimation at a randomly drawn iteration. This is a common technique in the nonconvex analysis. For example, see \citep{reddi2016stochastic,reddi2016stochastic-variance,yurtsever2019conditional} and the references therein.\end{remark}

\begin{corollary} \label{thm:TOS-indicator-Lipschitz}
Consider Problem \eqref{eqn:model-problem} under the following assumptions:\\[0.25em]
\emph{(i)}~$g$ is the indicator function of a convex closed bounded set $\mathcal{G} \subseteq \R^n$ with a finite diameter $D_\mathcal{G} := \sup_{x,y \in \mathcal{G}} \norm{x-y}$. \\[0.25em]
\emph{(ii)}~$\nabla f$ is bounded on $\mathcal{G}$, \emph{i.e.}, $\norm{\nabla f(x)} \leq G_f, ~ \forall x \in \mathcal{G}$. \\[0.25em]
\emph{(iii)}~$h$ is $L_h$-Lipschitz continuous on $\R^n$.\\[0.5em]
Choose $y_1 \in \mathcal{G}$.
Then, $z_\tau$ returned by TOS after $T$ iterations with the fixed step-size $\gamma_t = \frac{D_\mathcal{G}}{2 (G_f + L_h) T^{2/3}}$ satisfies
\begin{equation}\label{eqn:eps-solution-corollary}
\begin{aligned}
\mathbb{E}_{\tau}[\ip{\nabla f (z_\tau)}{z_\tau - & x} +  h(z_\tau) - h(x)] \leq \frac{4D_{\mathcal{G}}(G_f + L_h)}{T^{1/3}},
\quad \forall x \in \mathcal{G}.
\end{aligned}
\end{equation}
\end{corollary}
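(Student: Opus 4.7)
The plan is to obtain this corollary as an immediate specialization of \Cref{thm:TOS-Lipschitz} with $g$ taken to be the indicator function $\iota_{\mathcal{G}}$. First I would check that the three assumptions of the corollary imply the four assumptions of the theorem. Assumption (i) of the corollary gives $\dom{g} = \mathcal{G}$ with diameter $D_g = D_{\mathcal{G}}$, matching Assumption (i) of the theorem. Since the indicator function is identically zero on its domain, it is Lipschitz continuous there with constant $L_g = 0$, so Assumption (ii) of the theorem holds trivially. Assumptions (ii) and (iii) of the corollary reproduce Assumptions (iii) and (iv) of the theorem verbatim.

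Next I would substitute $L_g = 0$ and $D_g = D_{\mathcal{G}}$ into the conclusion of \Cref{thm:TOS-Lipschitz}. The prescribed step-size $\gamma = D_g / (2(G_f + L_g + L_h) T^{2/3})$ collapses to $D_{\mathcal{G}} / (2(G_f + L_h) T^{2/3})$, which is precisely the step-size in the corollary. The right-hand side of \eqref{eqn:theorem-TOS-Lipschitz} likewise simplifies to $4 D_{\mathcal{G}}(G_f + L_h)/T^{1/3}$, matching the bound claimed in \eqref{eqn:eps-solution-corollary}.

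Finally, I would observe that the $g(z_\tau) - g(x)$ terms on the left-hand side of \eqref{eqn:theorem-TOS-Lipschitz} vanish in this setting. The TOS update defines $z_t = \prox_{\gamma_t g}(y_t) = \proj_{\mathcal{G}}(y_t) \in \mathcal{G}$, hence $g(z_\tau) = 0$ for every realization of $\tau$. The corollary only asserts the bound for $x \in \mathcal{G}$, so $g(x) = 0$ as well. Noting that $\dom{g} = \mathcal{G}$ in this case, the universal quantifier from \eqref{eqn:theorem-TOS-Lipschitz} specializes directly to the one in \eqref{eqn:eps-solution-corollary}. There is essentially no obstacle in this derivation; the corollary is a bookkeeping consequence of \Cref{thm:TOS-Lipschitz}, and the only point worth highlighting is that the indicator function legitimately fits the Lipschitz-on-domain hypothesis with constant zero.
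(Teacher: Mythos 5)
Your proposal is correct and matches the paper's own proof: the corollary is obtained by specializing \Cref{thm:TOS-Lipschitz} with $\dom{g} = \dom{\phi} = \mathcal{G}$, $D_g = D_\mathcal{G}$, $L_g = 0$, and noting that $g(z_\tau) = g(x) = 0$ since both points lie in $\mathcal{G}$. No gaps.
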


\begin{proof} \Cref{thm:TOS-indicator-Lipschitz} follows from  \Cref{thm:TOS-Lipschitz} with 
 $\dom{g} = \dom{\phi} = \mathcal{G}$. Assumptions \emph{(i)} and \emph{(ii)} in \Cref{thm:TOS-Lipschitz} hold with $D_g = D_\mathcal{G}$ and $L_g = 0$. We have $g(z_\tau) = g(x) = 0$ because $z_\tau$ and $x$ belong to $\mathcal{G}$.
\end{proof}

\begin{remark}
The $\epsilon$-approximate solution (in expectation) that we consider in \eqref{eqn:theorem-TOS-Lipschitz} and \eqref{eqn:eps-solution-corollary} reminds the Frank-Wolfe gap (in expectation) used in \citep{reddi2016stochastic,yurtsever2019conditional}. When $h$ is missing and $g$ is the indicator function, the Frank-Wolfe gap quantifies the error by 
$\mathbb{E}_{\tau} \big[ \max_{x \in \mathcal{G}} \, \ip{\nabla f(z_\tau)}{z_\tau - x} \big]$. \eqref{eqn:eps-solution-corollary} holds for all $x \in \mathcal{G}$ so we can take the maximum over $x$ and get the bound on $\max_{x \in \mathcal{G}} \mathbb{E}_{\tau}[\ip{\nabla f (z_\tau)}{z_\tau - x} +  h(z_\tau) - h(x)]$. Note that $\max_{x \in \mathcal{G}} \mathbb{E}_\tau [\,\cdot\,] \leq  \mathbb{E}_\tau [\max_{x \in \mathcal{G}} (\,\cdot\,)]$. We leave the question whether similar guarantees hold for $ \mathbb{E}_\tau [\max_{x \in \mathcal{G}} (\,\cdot\,)]$ open. 
\end{remark}

\Cref{thm:TOS-Lipschitz} does not apply to Problem \eqref{eqn:model-problem-constraint} because indicator functions fail Lipschitz continuity assumption \textit{(iv)} in \Cref{thm:TOS-Lipschitz}. 
The next theorem establishes convergence guarantees of TOS for Problem~\eqref{eqn:model-problem-constraint}.

\begin{theorem}
\label{thm:TOS-indicator}
Consider Problem~\eqref{eqn:model-problem-constraint} under the following assumptions:\\[0.5em]
\emph{(i)}~$\mathcal{G} \subseteq \R^n$ is a bounded closed convex set with a finite diameter $D_\mathcal{G} := \sup_{x,y \in \mathcal{G}} \norm{x-y}$. \\[0.5em]
\emph{(ii)}~$\nabla f$ is bounded on $\mathcal{G}$, \emph{i.e.}, $\norm{\nabla f(x)} \leq G_f, ~ \forall x \in \mathcal{G}$. \\[0.5em]
\emph{(iii)}~$\mathcal{H} \subseteq \R^n$ is a closed convex set. \\[0.5em]
Then, $z_\tau$ returned by TOS (\Cref{alg:three-operator-splitting}) after $T$ iterations with the fixed step-size $\gamma_t = \frac{D_\mathcal{G}}{2 G_f T^{2/3}}$ satisfies
\begin{align}
\mathbb{E}_\tau [\dist(z_\tau, \mathcal{H})] & \leq \frac{3D_{\mathcal{G}}}{T^{1/3}}, \\[0.25em]
\mathbb{E}_\tau [\ip{\nabla f (z_\tau)}{z_\tau - x}] & \leq  \frac{4G_fD_{\mathcal{G}}}{T^{1/3}}, \quad \forall x \in \mathcal{G} \cap \mathcal{H}.\notag
\end{align}

\begin{proof}[Proof sketch]
The analysis is similar to the proof of \Cref{thm:TOS-Lipschitz}. 
We use \Cref{rmk:close-stationary-pair} \emph{(ii)} once we show that $(x_\tau,z_\tau)$ are close and approximately stationary.
\end{proof}

\end{theorem}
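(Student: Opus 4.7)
The plan is to establish that $(x_\tau, z_\tau)$ forms an $\alpha$-close $\beta$-stationary pair in the sense of \Cref{def:close-stationary-pair}, with $\alpha,\beta = \mathcal{O}(T^{-1/3})$, and then invoke \Cref{rmk:close-stationary-pair}\,\emph{(ii)} to translate this into the advertised $\omega$-feasibility and stationarity bounds. The starting point is the same per-iteration inequality \eqref{eqn:proof-sketch-step-1} from the proof sketch of \Cref{thm:TOS-Lipschitz}: for every $t$ and every $x \in \dom{\phi}$,
\[
\ip{\nabla f(z_t)}{x_t - x} + g(z_t) - g(x) + h(x_t) - h(x) \le \frac{1}{2\gamma}\bigl(\norm{y_t - x}^2 - \norm{y_{t+1} - x}^2 - \norm{x_t - z_t}^2\bigr).
\]
Because $g = \iota_\mathcal{G}$ and $h = \iota_\mathcal{H}$, while $z_t \in \mathcal{G}$ and $x_t \in \mathcal{H}$ by the prox steps and $x \in \dom{\phi} = \mathcal{G} \cap \mathcal{H}$, every function-value term vanishes, leaving a purely gradient-based bound.

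For the $\beta$-stationarity, I would sum the reduced inequality over $t = 1, \ldots, T$ and divide by $T$. The $\norm{y_t - x}^2$ terms telescope because $\gamma$ is constant, the $-\norm{x_t - z_t}^2$ summand is non-positive and may be dropped, and $\norm{y_1 - x}^2 \le D_\mathcal{G}^2$ provided $y_1 \in \mathcal{G}$ (the implicit initialization matching \Cref{thm:TOS-indicator-Lipschitz}). Substituting $\gamma = D_\mathcal{G} / (2G_f T^{2/3})$ yields $\mathbb{E}_\tau[\ip{\nabla f(z_\tau)}{x_\tau - x}] \le D_\mathcal{G}^2 / (2\gamma T) = G_f D_\mathcal{G} / T^{1/3}$, so I can take $\beta = G_f D_\mathcal{G} / T^{1/3}$.

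For the $\alpha$-closeness, I would lower-bound the gradient inner product via Cauchy--Schwarz together with the triangle inequality $\norm{x_t - x} \le \norm{x_t - z_t} + \norm{z_t - x} \le \norm{x_t - z_t} + D_\mathcal{G}$ (since both $z_t, x \in \mathcal{G}$), obtaining
\[
\norm{x_t - z_t}^2 \le 2\gamma G_f\,\norm{x_t - z_t} + 2\gamma G_f D_\mathcal{G} + \norm{y_t - x}^2 - \norm{y_{t+1} - x}^2.
\]
Averaging over $t$, applying Jensen's inequality to pass from $\mathbb{E}_\tau[\norm{x_\tau - z_\tau}^2]$ to $(\mathbb{E}_\tau[\norm{x_\tau - z_\tau}])^2$, and telescoping produces a quadratic inequality in $u := \mathbb{E}_\tau[\norm{x_\tau - z_\tau}]$. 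Solving it (and using $T \ge 1$ to absorb the $D_\mathcal{G}^2/T$ remainder into the dominant $D_\mathcal{G}^2/T^{2/3}$ term) yields $u \le 3D_\mathcal{G}/T^{1/3}$. Since $x_\tau \in \mathcal{H}$, this immediately implies $\mathbb{E}_\tau[\dist(z_\tau,\mathcal{H})] \le u \le 3D_\mathcal{G}/T^{1/3}$, the first advertised bound. Feeding $\alpha = 3D_\mathcal{G}/T^{1/3}$ and $\beta = G_f D_\mathcal{G}/T^{1/3}$ into \Cref{rmk:close-stationary-pair}\,\emph{(ii)} gives $\mathbb{E}_\tau[\ip{\nabla f(z_\tau)}{z_\tau - x}] \le \alpha G_f + \beta = 4G_f D_\mathcal{G}/T^{1/3}$ for all $x \in \mathcal{G} \cap \mathcal{H}$, matching the second claim.

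The main obstacle is the quadratic-inequality step that produces the closeness rate: constants must be tracked carefully, and the choice $\gamma \propto T^{-2/3}$ is precisely what balances the $\norm{x_\tau - z_\tau}^2$ term against $2\gamma G_f\norm{x_\tau - z_\tau}$ and the telescoped $\norm{y_1-x}^2/T$ remainder to yield a clean $T^{-1/3}$ rate. Once the base inequality and this balancing are in place, the remaining work is routine algebra.
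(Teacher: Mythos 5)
Your proposal is correct and follows essentially the same route as the paper: simplify the key telescoping inequality using the indicator structure, derive a quadratic inequality in $\mathbb{E}_\tau[\norm{x_\tau - z_\tau}]$ via Cauchy--Schwarz and the diameter bound, solve it with the prescribed step-size to get the $3D_\mathcal{G}/T^{1/3}$ closeness (hence feasibility) bound, and plug back (equivalently, apply \Cref{rmk:close-stationary-pair}\,\emph{(ii)}) for the $4G_fD_\mathcal{G}/T^{1/3}$ stationarity bound. The constants and the implicit assumption $y_1 \in \mathcal{G}$ match the paper's appendix proof.
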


\subsection{Extensions for More Than Three Functions}
\label{sec:extensions-for-more-than-three-terms}

Consider the extension of Problem \eqref{eqn:model-problem} with an arbitrary number of nonsmooth terms (equivalently, an extension of Problem \eqref{eqn:model-problem-constraint} with an arbitrary number of constraints):
\begin{equation}
\label{eqn:model-problem-arbitrary-terms}
\min_{x \in \R^n} \quad f(x) + \sum_{i=1}^m g_i(x) .
\end{equation}

One can solve this problem with TOS via a product-space formulation (see Section~6.1 in \citep{briceno2015forward}). 
We introduce slack variables $x^{(0)}, x^{(1)}, \ldots, x^{(m)} \in \R^n$ and reformulate Problem~\eqref{eqn:model-problem-arbitrary-terms} as
\begin{equation}
\label{eqn:model-problem-product-space}
\begin{aligned}
& \min_{x^{(i)} \in \R^n} & & f(x^{(0)}) + \sum_{i=1}^m g_i(x^{(i)}) \\
& \mathrm{subj.~to} & & x^{(0)} = x^{(1)} = \ldots = x^{(m)}.
\end{aligned}
\end{equation}
Problem \eqref{eqn:model-problem-product-space} is an instance of Problem~\eqref{eqn:model-problem} in $\R^{(m+1)n}$. 
We can use TOS for solving this problem. 
\Cref{alg:three-operator-splitting-arbitrary-terms} in the supplementary material describes the algorithm steps.

\section{Stochastic Nonconvex TOS}

In this section, the differentiable term is the expectation of a function of a random variable, \emph{i.e.}, $f(x) = \mathbb{E}_\xi \tilde{f}(x,\xi)$, where $\xi$ is a random variable with distribution $\mathcal{P}$:
\begin{equation} \label{eqn:model-problem-stochastic}
\min_{x \in \R^n} \quad \phi(x) := \mathbb{E}_{\xi} \tilde{f}(x,\xi) + g(x) + h(x).
\end{equation}
This template covers a large number of applications in machine learning and statistics, including the finite-sum formulations that arise in M-estimation and empirical risk minimization problems.

In this setting, we replace $\nabla f(z_t)$ in \Cref{alg:three-operator-splitting} with the following estimator:
\begin{equation} \label{eqn:stochastic-estimator}
    u_t := \frac{1}{|Q_t|} \sum_{\xi \in Q_t} \nabla \tilde{f}(z_t,\xi),
\end{equation}
where $Q_t$ is a set of $|Q_t|$ \textit{i.i.d.} samples from distribution $\mathcal{P}$.

\begin{theorem}
\label{thm:three-operator-splitting-stochastic}
Consider Problem \eqref{eqn:model-problem-stochastic}. Instate the assumptions of \Cref{thm:TOS-Lipschitz}. Further, assume that the following conditions hold:\\[0.5em]
\emph{(v)}~$\nabla \tilde{f} (x,\xi)$ is an unbiased estimator of $\nabla f(x)$,
\begin{equation*}
    \mathbb{E}_\xi [\nabla \tilde{f}(x,\xi)] = \nabla f(x),\quad \forall x \in \R^n.
\end{equation*}
\emph{(vi)}~$\nabla \tilde{f} (x,\xi)$ has bounded variance: For some $\sigma < +\infty$, 
\begin{equation*}
    \mathbb{E}_\xi [\norm{\nabla \tilde{f}(x,\xi) - \nabla f(x)}^2] \leq \sigma^2, \quad \forall x \in \R^n.
\end{equation*}
Consider TOS (\Cref{alg:three-operator-splitting}) with the stochastic gradient estimator \eqref{eqn:stochastic-estimator} instead of $\nabla f(z_t)$.
Choose the algorithm parameters
\begin{align*} 
\gamma_t & = \frac{D_g}{2 (G_f + L_g + L_h) T^{2/3}} \qquad \text{and} \qquad |Q_t|  = \Big\lceil \frac{T^{2/3}}{2(G_f + L_g + L_h)^2} \Big\rceil.
\end{align*}
 Then, $z_\tau$ returned by the algorithm after $T$ iterations satisfies, $\forall x \in \dom{\phi}$,
\begin{align*}
\resizebox{0.98\hsize}{!}
{$
\displaystyle
 \mathbb{E}_\tau \mathbb{E}[ \ip{\nabla f(z_t)}{z_t - x} + g(z_t) - g(x) + h(z_t) - h(x) ] \leq D_g (G_f + L_g + L_h) \bigg( \frac{\sqrt{4 + 2\sigma^2}}{T^{1/2}} + \frac{4 +\sqrt{2}+ \sigma^2}{T^{1/3}} \bigg).
$}
\end{align*}
\end{theorem}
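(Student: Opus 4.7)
The plan is to mimic the proof of \Cref{thm:TOS-Lipschitz} while carefully propagating the stochastic gradient error $e_t := \nabla f(z_t) - u_t$. Since only the second prox step uses the gradient, the same optimality-condition manipulation that yields \eqref{eqn:proof-sketch-step-1} in the deterministic case gives, for every $x \in \dom{\phi}$ and every $t$,
\begin{equation*}
\ip{u_t}{x_t - x} + g(z_t) - g(x) + h(x_t) - h(x) \leq \frac{1}{2\gamma}\Big(\norm{y_t - x}^2 - \norm{y_{t+1} - x}^2 - \norm{x_t - z_t}^2\Big).
\end{equation*}
From this single identity I would derive both the $\beta$-stationarity and the $\alpha$-closeness bounds needed to invoke \Cref{rmk:close-stationary-pair}(i).

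For $\beta$-stationarity, I would replace $u_t$ by $\nabla f(z_t)$ at the price of $\ip{-e_t}{x_t - x}$, and split this into $\ip{-e_t}{z_t - x} + \ip{-e_t}{x_t - z_t}$. With respect to $\mathcal{F}_t := \sigma(Q_1,\ldots,Q_{t-1})$, the first piece is conditionally zero-mean, since $z_t$ is $\mathcal{F}_t$-measurable and $\mathbb{E}[u_t\mid\mathcal{F}_t] = \nabla f(z_t)$. The second piece is controlled by Young's inequality with parameter $\gamma$, $|\ip{e_t}{x_t-z_t}| \leq \tfrac{\gamma}{2}\norm{e_t}^2 + \tfrac{1}{2\gamma}\norm{x_t-z_t}^2$, which exactly absorbs the negative $\norm{x_t-z_t}^2/(2\gamma)$ on the right. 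Telescoping the $y_t$-terms, averaging over $t$, taking expectations, and using $\mathbb{E}\norm{e_t}^2 \leq \sigma^2/|Q_t|$ together with $y_1 \in \dom{g}$ (so $\norm{y_1-x}\leq D_g$) delivers \eqref{eqn:beta-stationarity} in expectation with $\beta = \tfrac{D_g^2}{2\gamma T} + \tfrac{\gamma\sigma^2}{2|Q|}$.

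For $\alpha$-closeness, I would lower-bound the left-hand side of the starting inequality via $\ip{u_t}{x_t - x} \geq -(G_f + \norm{e_t})(\norm{x_t - z_t} + D_g)$ combined with the Lipschitz bounds on $g$ and $h$ and the diameter bound on $\dom{g}$, exactly mirroring the derivation of \eqref{eqn:proof-sketch-step-2}. The new cross term $\mathbb{E}[\norm{e_t}\norm{x_t - z_t}]$ is tamed by a second Young's inequality with a parameter smaller than $\gamma$, so that only a fraction of the $\norm{x_t-z_t}^2/(2\gamma)$ on the right is consumed. Averaging over $t$, applying $\mathbb{E}\norm{e_t} \leq \sigma/\sqrt{|Q_t|}$ via Jensen, and using $\tfrac{1}{T}\sum_t \mathbb{E}\norm{x_t-z_t}^2 \geq (\mathbb{E}_\tau\mathbb{E}\norm{x_\tau-z_\tau})^2$ yields a quadratic inequality of the form $M^2 \leq A + BM$ in the quantity $M := \mathbb{E}_\tau\mathbb{E}\norm{x_\tau-z_\tau}$, from which $\alpha$-closeness follows as $M \leq \tfrac{B}{2} + \sqrt{A + B^2/4}$; the square root is where the $T^{-1/2}\sqrt{4+2\sigma^2}$ contribution in the final bound will be born.

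To conclude, I would plug $\alpha$ and $\beta$ into \Cref{rmk:close-stationary-pair}(i) and substitute the prescribed $\gamma$ and $|Q_t|$, which are calibrated to balance the deterministic bias term $D_g^2/(\gamma T)$ against the variance terms $\gamma\sigma^2/|Q|$ and $\gamma D_g\sigma/\sqrt{|Q|}$, giving the two-rate bound in $T^{-1/3}$ and $T^{-1/2}$. The main obstacle I expect is bookkeeping in Step~2: the Young's-inequality parameter used to split $\norm{e_t}\norm{x_t-z_t}$, the step size $\gamma$, and the batch size $|Q_t|$ must be tuned jointly so that the negative $\norm{x_t-z_t}^2$ terms are not over-consumed and the constants in the final rate collapse into the stated $\sqrt{4+2\sigma^2}$ and $4+\sqrt{2}+\sigma^2$ forms.
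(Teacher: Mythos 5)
Your plan is correct and follows the paper's skeleton almost exactly: start from the per-iteration prox inequality (the content of \Cref{lem:main-lemma-nonconvex-TOS}), control the stochastic error, obtain (a) an averaged pair-stationarity bound and (b) a quadratic inequality in $M=\mathbb{E}_\tau\mathbb{E}\norm{x_\tau-z_\tau}$ via the lower bound $-(G_f+L_g+L_h)D_g$ and Jensen, then combine through \Cref{rmk:close-stationary-pair}\emph{(i)}. The one genuine difference is how the noise $e_t=\nabla f(z_t)-u_t$ is treated. The paper splits $\ip{u_t}{x_t-x}$ into four terms and applies Young's inequality with a free parameter $\alpha$ to \emph{both} noise inner products, paying $\tfrac{1}{\alpha}\norm{e_t}^2+\tfrac{\alpha}{2}D_g^2+\tfrac{\alpha}{2}\norm{x_t-z_t}^2$, invoking \Cref{lem:variance-lemma}, and then tuning $\alpha=\tfrac{2(G_f+L_g+L_h)}{D_g T^{1/3}}$; both the closeness and the stationarity bounds are then read off the single combined inequality \eqref{eqn:proof-theorem3-theorem4-anchor}. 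You instead kill $\ip{e_t}{z_t-x}$ by conditional unbiasedness ($z_t$ is $\mathcal{F}_t$-measurable, $\mathbb{E}[u_t\mid\mathcal{F}_t]=\nabla f(z_t)$) and only Young-bound $\ip{e_t}{x_t-z_t}$. This is valid here because the theorem's guarantee is for each fixed $x$ (it would not survive a $\sup_x$ inside the expectation, which the paper explicitly leaves open in Remark~3), and it buys you slightly tighter constants: you avoid the $\tfrac{\alpha}{2}D_g^2$ penalty and one of the $\sigma^2/(\alpha|Q|)$ terms, so your $\beta=\tfrac{D_g^2}{2\gamma T}+\tfrac{\gamma\sigma^2}{2|Q|}$ is of order $T^{-1/3}$ with a vanishing variance contribution, versus the paper's $(2+\sigma^2)T^{-1/3}$ coefficient. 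The bookkeeping you flag does work out: with a Young parameter such as $\tfrac{1}{2\gamma}$ in the closeness step, the extra terms $\gamma\sigma^2/|Q|$ and $D_g\sigma/\sqrt{|Q|}$ enter the quadratic inequality, and with the prescribed $\gamma$ and $|Q_t|$ you land at constants no larger than the stated $\sqrt{4+2\sigma^2}$ and $4+\sqrt{2}+\sigma^2$ (e.g.\ via $2\sqrt{2}\sigma\le 2+\sigma^2$), so the theorem follows; the paper's $\alpha$-based route is what produces those exact constants.
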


Similar to \Cref{thm:TOS-indicator-Lipschitz},
we can specify guarantees for the case where $g$ is an indicator function and $h$ is $L_h$-Lipschitz continuous. We skip the details.

Next, analogous to Problem~\eqref{eqn:model-problem-constraint}, we consider the nonconvex expectation minimization problem over the intersection of convex sets:
\begin{equation}
\label{eqn:model-problem-constraint-stochastic}
\min_{x \in \R^n} \quad f(x) := \mathbb{E}_\xi \tilde{f}(x,\xi) \quad \mathrm{subj.~to}
\quad x \in \mathcal{G} \cap \mathcal{H}.
\end{equation}
The next theorem presents convergence guarantees of TOS for this problem.

\begin{theorem} \label{thm:TOS-stochastic-indicator}
Consider Problem \eqref{eqn:model-problem-constraint-stochastic}. Instate the assumptions of \Cref{thm:TOS-indicator,thm:three-operator-splitting-stochastic}. 
Consider TOS (\Cref{alg:three-operator-splitting}) with the stochastic gradient estimator \eqref{eqn:stochastic-estimator} instead of $\nabla f(z_t)$.
Choose the algorithm parameters
\begin{align*} 
\gamma_t = \frac{D_\mathcal{G}}{2 G_f T^{2/3}} \qquad \text{and}\qquad |Q_t| = \Big\lceil \frac{T^{2/3}}{2G_f^2} \Big\rceil.
\end{align*}
Then, $z_\tau$ returned by the algorithm after $T$ iterations satisfies, 
\begin{align*}
\begin{aligned}
\mathbb{E}_\tau \mathbb{E} [\dist(z_\tau, \mathcal{H})] & 
\leq D_\mathcal{G} \bigg( \frac{\sqrt{4+2\sigma^2}}{T^{1/2}}  +  \frac{2 + \sqrt{2}}{T^{1/3}} \bigg),
\\[0.25em]
\mathbb{E}_\tau \mathbb{E} [\ip{\nabla f (z_\tau)}{z_\tau - x}] & 
\leq G_f D_\mathcal{G} \bigg( \frac{\sqrt{4+2\sigma^2}}{T^{1/2}} + \frac{4 + \sqrt{2} +\sigma^2}{T^{1/3}} \bigg), \quad \forall x \in \mathcal{G} \cap \mathcal{H}.
\end{aligned}
\end{align*}

\end{theorem}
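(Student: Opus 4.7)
The plan is to mirror the argument of Theorem~\ref{thm:TOS-indicator}, but track the stochastic error introduced by replacing $\nabla f(z_t)$ with $u_t$, exactly as in the passage from Theorem~\ref{thm:TOS-Lipschitz} to Theorem~\ref{thm:three-operator-splitting-stochastic}. I would first write the optimality conditions for the $z_t$ and $x_t$ proximal steps (now with $u_t$ in place of $\nabla f(z_t)$) and combine them, using that $g = \iota_\mathcal{G}$ and $h = \iota_\mathcal{H}$, to obtain a per-iteration inequality of the form
\begin{align*}
\ip{u_t}{x_t - x} \leq \frac{1}{2\gamma_t}\Big(\norm{y_t - x}^2 - \norm{y_{t+1} - x}^2 - \norm{x_t - z_t}^2\Big)
\end{align*}
for every $x \in \mathcal{G}\cap\mathcal{H}$. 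Then I would add and subtract $\nabla f(z_t)$, writing $\ip{u_t}{x_t - x} = \ip{\nabla f(z_t)}{x_t - x} + \ip{u_t - \nabla f(z_t)}{x_t - z_t} + \ip{u_t - \nabla f(z_t)}{z_t - x}$. Taking expectation with respect to $Q_t$ conditionally on $z_t$, the last term vanishes by unbiasedness (v), since $z_t$ is independent of the batch $Q_t$; the middle cross term, however, does \emph{not} vanish because $x_t$ depends on $u_t$, and this is where the batch size enters.

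To control the cross term, I would apply Young's inequality $\ip{u_t - \nabla f(z_t)}{x_t - z_t} \leq \tfrac{1}{2\gamma_t}\norm{x_t - z_t}^2 + \tfrac{\gamma_t}{2}\norm{u_t - \nabla f(z_t)}^2$, which cancels the $\norm{x_t - z_t}^2$ term on the right and leaves an expected noise contribution bounded by $\tfrac{\gamma_t \sigma^2}{2|Q_t|}$ thanks to (vi) and \emph{i.i.d.} sampling inside the mini-batch. Averaging over $t=1,\dots,T$, telescoping the $\norm{y_t-x}^2$ terms, and using $\norm{y_1 - x} \leq D_\mathcal{G}$ (which follows from $y_1 \in \mathcal{G}$ and boundedness of $\mathcal{G}$), I obtain the $\beta$-stationarity bound in expectation with the noise inflation term present.

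For the closeness part, I would replicate the second-order argument from the sketch of Theorem~\ref{thm:TOS-Lipschitz}: pick an auxiliary $x \in \mathcal{G}\cap\mathcal{H}$ and lower-bound $\ip{\nabla f(z_t)}{x_t - x}$ by $-G_f(D_\mathcal{G}+\norm{x_t-z_t})$, again via (ii) and the diameter bound. Adding and subtracting $\nabla f(z_t)$ as above, taking expectations, averaging over $t$, and telescoping gives an inequality of the form $a\,\mathbb{E}\|x_\tau - z_\tau\|^2 \leq b + c\,\mathbb{E}\|x_\tau - z_\tau\|$ (after Jensen), whose solution yields a bound on $\mathbb{E}[\norm{x_\tau - z_\tau}]$ that is exactly $\alpha$ in the language of Definition~\ref{def:close-stationary-pair}. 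Finally, Observation~\ref{rmk:close-stationary-pair} (ii) converts the $\alpha$-close $\beta$-stationary pair into the two displayed bounds $\mathbb{E}\dist(z_\tau,\mathcal{H}) \leq \alpha$ and $\mathbb{E}\ip{\nabla f(z_\tau)}{z_\tau - x} \leq \alpha G_f + \beta$.

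The main obstacle is picking $\gamma_t$ and $|Q_t|$ so that the three error sources --- the deterministic $\mathcal{O}(D_\mathcal{G}^2/(\gamma T))$ term, the noise term $\mathcal{O}(\gamma \sigma^2/|Q_t|)$, and the $\alpha G_f$ term contributed by converting closeness into stationarity --- balance to the desired $\mathcal{O}(T^{-1/3})$ plus $\mathcal{O}(\sigma/T^{1/2})$ rates. The choices $\gamma_t = D_\mathcal{G}/(2G_f T^{2/3})$ and $|Q_t| = \lceil T^{2/3}/(2G_f^2)\rceil$ are exactly those that clear these constants and match the form of the bounds stated; verifying that the constants work out and that the Jensen inequality losses (moving $\mathbb{E}$ inside square roots when passing from a bound on $\mathbb{E}\|x_\tau - z_\tau\|^2$ to one on $\mathbb{E}\|x_\tau - z_\tau\|$) do not degrade the stated rates is the only delicate, non-routine bookkeeping step. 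The rest is a direct combination of the techniques already developed for Theorems~\ref{thm:TOS-indicator} and~\ref{thm:three-operator-splitting-stochastic}.
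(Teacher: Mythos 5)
Your plan is essentially the paper's own route: start from the per-iteration prox inequality (the key lemma), decompose $\ip{u_t}{x_t-x}$ by adding and subtracting $\nabla f(z_t)$, control the noise via the mini-batch variance bound, establish closeness of $(x_\tau,z_\tau)$ through a second-order inequality in $\mathbb{E}\norm{x_\tau-z_\tau}$, and finish with \Cref{rmk:close-stationary-pair}\,(ii). Two points of comparison are worth recording. First, you dispose of $\ip{u_t-\nabla f(z_t)}{z_t-x}$ by conditional unbiasedness ($z_t$ is measurable with respect to the past and $Q_t$ is independent of it), whereas the paper bounds this term too by Young's inequality with a free parameter $\alpha$, which is where its $\tfrac{\alpha}{2}D_\mathcal{G}^2$ and $\tfrac{\sigma^2}{\alpha|Q|}$ terms (and hence the $\sigma^2/T^{1/3}$ constant in the statement) come from; your variant is legitimate and in fact yields a smaller noise contribution, but it means the stated constants are not reproduced verbatim and the bookkeeping you defer must be redone from scratch rather than read off the paper.

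Second, and this is the one step that would fail as literally described: you propose Young's inequality with parameter exactly $\tfrac{1}{2\gamma_t}$, so that the $\norm{x_t-z_t}^2$ term is cancelled completely. That is fine for the stationarity inequality, but you then say the closeness part proceeds ``as above''; if the same full-cancellation split is reused there, the coefficient $a$ in your inequality $a\,\mathbb{E}\norm{x_\tau-z_\tau}^2\le b+c\,\mathbb{E}\norm{x_\tau-z_\tau}$ is zero and you get no bound on $\mathbb{E}\norm{x_\tau-z_\tau}$ at all. The quadratic term must survive with a positive coefficient: either use a strictly smaller Young parameter in the closeness step (e.g.\ $\tfrac{1}{4\gamma}$), or do what the paper does, namely keep a single anchor inequality with a free $\alpha$ and choose $\alpha=\tfrac{2G_f}{D_\mathcal{G}T^{1/3}}\le\tfrac{1}{\gamma}$, so that the coefficient $\tfrac{1}{2\gamma}-\tfrac{\alpha}{2}$ remains essentially $\tfrac{1}{2\gamma}$ and the second-order inequality can be solved, after which the resulting bound on $\mathbb{E}\norm{x_\tau-z_\tau}$ is substituted back to obtain the nonstationarity bound. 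With that correction your argument goes through and gives the stated rates (indeed with somewhat better $\sigma$-dependence), but matching the exact displayed constants requires following the paper's single-anchor bookkeeping.
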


\begin{corollary}
Under the assumptions listed in \Cref{thm:three-operator-splitting-stochastic} (resp. \Cref{thm:TOS-stochastic-indicator}), TOS returns an $\epsilon$-stationary point in expectation per \Cref{def:epsilon-stationary} (resp. $\epsilon$-feasible $\epsilon$-stationary point per \Cref{def:epsilon-feasible-stationary}) after $T \leq \mathcal{O}(1/\epsilon^3)$ iterations. In total, this algorithm requires drawing $\mathcal{O}(1/\epsilon^5)$ \emph{i.i.d.} samples from distribution $\mathcal{P}$. 
\end{corollary}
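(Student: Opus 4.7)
The strategy is to invoke Theorem~\ref{thm:three-operator-splitting-stochastic} (respectively Theorem~\ref{thm:TOS-stochastic-indicator}) directly and then solve for the iteration count $T$ that forces the upper bound to fall below $\epsilon$. Since every term appearing on the right-hand side of those theorems decays either like $T^{-1/2}$ or $T^{-1/3}$, and since $T^{-1/2}\le T^{-1/3}$ for all $T\ge 1$, the dominating contribution is the $T^{-1/3}$ term. So the first step is to bound the RHS by $C\cdot T^{-1/3}$ with $C$ a constant depending only on $D_g$ (or $D_\mathcal{G}$), $G_f$, $L_g$, $L_h$, and $\sigma$, which is obtained by collecting the two summands into a single $T^{-1/3}$ expression.

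Next I would set $C\cdot T^{-1/3}\le \epsilon$ and solve, yielding $T \le \lceil (C/\epsilon)^3\rceil = \mathcal{O}(1/\epsilon^3)$. Plugging this choice of $T$ into the theorem's conclusion gives $\mathbb{E}_\tau\mathbb{E}[\langle \nabla f(z_\tau), z_\tau-x\rangle + g(z_\tau)-g(x) + h(z_\tau)-h(x)]\le\epsilon$ for every $x\in\dom{\phi}$, which is exactly the $\epsilon$-stationarity condition of Definition~\ref{def:epsilon-stationary} in expectation. For the constrained case, I would apply the same reasoning to \emph{both} bounds in Theorem~\ref{thm:TOS-stochastic-indicator} simultaneously and take the larger of the two required iteration counts; the resulting $T$ is still $\mathcal{O}(1/\epsilon^3)$, and after this many iterations $\mathbb{E}_\tau\mathbb{E}[\dist(z_\tau,\mathcal{H})]\le\epsilon$ and $\mathbb{E}_\tau\mathbb{E}[\langle \nabla f(z_\tau),z_\tau-x\rangle]\le\epsilon$ for all $x\in\mathcal{G}\cap\mathcal{H}$, matching Definition~\ref{def:epsilon-feasible-stationary}.

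Finally, I would count the total number of stochastic gradient queries. Each iteration draws $|Q_t| = \lceil T^{2/3}/(2(G_f+L_g+L_h)^2)\rceil$ samples (respectively $\lceil T^{2/3}/(2G_f^2)\rceil$ for Theorem~\ref{thm:TOS-stochastic-indicator}), so summing over $t=1,\dots,T$ yields a total of $T\cdot\mathcal{O}(T^{2/3}) = \mathcal{O}(T^{5/3})$. Substituting $T=\mathcal{O}(1/\epsilon^3)$ gives the advertised sample complexity $\mathcal{O}(1/\epsilon^5)$.

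There is no serious obstacle here; the proof is essentially bookkeeping on top of the two previous theorems. The only minor care point is that in the constrained case one must verify that a single choice of $T$ simultaneously drives both the infeasibility bound and the Frank-Wolfe-gap bound below $\epsilon$—which works automatically because both decay at the same rate $T^{-1/3}$, so the worst of the two constants determines $T$ up to the same $\mathcal{O}(1/\epsilon^3)$ order.
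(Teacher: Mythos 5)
Your proposal is correct and follows essentially the same route as the paper: both arguments simply absorb the $T^{-1/2}$ terms into the dominant $T^{-1/3}$ rate, invert $\epsilon \lesssim T^{-1/3}$ to get $T = \mathcal{O}(1/\epsilon^3)$, and multiply by the per-iteration batch size $|Q_t| = \mathcal{O}(T^{2/3})$ to obtain the $\mathcal{O}(T^{5/3}) = \mathcal{O}(1/\epsilon^5)$ sample complexity. Your extra remark about choosing a single $T$ that handles both the infeasibility and nonstationarity bounds in the constrained case is a correct (and slightly more explicit) version of what the paper leaves implicit.
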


\begin{proof}
$\epsilon \leq \mathcal{O}(1/T^{1/3})$ implies $T \leq \mathcal{O}(1/\epsilon^3)$ iteration complexity. 
At each iteration, we use $|Q_t| = \Omega(T^{2/3})$ stochastic gradients, so the total stochastic gradients complexity is $\sum_{t = 0}^T |Q_t| = (T+1) |Q_t| = \Omega(T^{5/3}) \leq \mathcal{O}(1/\epsilon^5)$.
\end{proof}

Reducing the stochastic gradient complexity of TOS (\Cref{alg:three-operator-splitting}) via variance reduction techniques (see, for example, \citep{roux2012stochastic,johnson2013accelerating,defazio2014saga,nguyen2017sarah,fang2018spider}) can be a valuable extension. 
We leave this for a future study.

\section{Numerical Experiments}
\label{sec:experiments}

This section demonstrates the empirical performance of TOS on the quadratic assignment problem (QAP). 
QAP is a challenging formulation in the NP-hard problem class \citep{sahni1976p}. 
We focus on the \emph{relax-and-round} strategy proposed in \citep{vogelstein2015fast}. 
This strategy requires solving a nonconvex optimization problem over the Birkhoff polytope (\emph{i.e.}, the set of doubly stochastic matrices).
First, we will summarize the main steps of this \emph{relax-and-round} strategy and explain how we can use TOS in this procedure. Then, we will compare the performance of TOS against the Frank-Wolfe method (FW) \citep{frank1956algorithm,jaggi2013revisiting,lacoste2016convergence} used in  \citep{vogelstein2015fast}.

\subsection{Problem Description}

Given the cost matrices $A$ and $B \in \R^{n\times n}$, the goal in QAP is to align these matrices by finding a permutation matrix that minimizes a quadratic objective:
\begin{equation}
\label{eqn:QAP}
\begin{aligned}
& \min_{X \in \R^{n \times n}} & & \mathrm{trace}(AXB^\top X^\top ) \\
& \mathrm{subj.~to} & & X \in \{0,1\}^{n \times n}, ~~ X1_n = X^\top1_n = 1_n, 
\end{aligned}
\end{equation}
where $1_n$ denotes the $n$-dimensional vector of ones. 

The challenge comes from the combinatorial nature of the feasible region. \eqref{eqn:QAP} is NP-Hard, so \citet{vogelstein2015fast} focus on its continuous relaxation: 
\begin{align}
\label{eqn:QAP-relaxed}
\begin{aligned}
& \min_{X \in \R^{n \times n}} & & \mathrm{trace}(AXB^\top X^\top ) \\
& \mathrm{subj.~to} & & X \in [0,1]^{n \times n}, ~~ X1_n = X^\top1_n = 1_n. 
\end{aligned}
\end{align}
\eqref{eqn:QAP-relaxed} is a quadratic optimization problem over the Birkhoff polytope. Remark that the quadratic objective is nonconvex in general.  

The relax-and-round strategy of \citep{vogelstein2015fast} involves two main steps: \\
1. Finding a local optimal solution of \eqref{eqn:QAP-relaxed}. \\
2. Rounding the solution to the closest permutation matrix. 

\textbf{Solving \eqref{eqn:QAP-relaxed}.} 
Projecting an arbitrary matrix onto the Birkhoff polytope is computationally challenging and the standard algorithms in the constrained nonconvex optimization literature are inefficient for \eqref{eqn:QAP-relaxed}. 

\citet{vogelstein2015fast} employ the FW algorithm to overcome this challenge. 
FW does not require projections. Instead, at each iteration, it requires solving a linear assignment problem (LAP). The arithmetic cost of LAP by using the Hungarian method or the Jonker-Volgenant algorithm \citep{kuhn1955hungarian,munkres1957algorithms,jonker1987shortest} is $\mathcal{O}(n^3)$.

In this paper, we suggest TOS for solving \eqref{eqn:QAP-relaxed} instead of FW. To apply TOS, we can split the Birkhoff polytope in two different ways. 

One, we can consider the intersection of row-stochastic matrices and column-stochastic matrices:
\begin{align}\tag{Split~1}\label{eqn:split1}
\begin{aligned}
\mathcal{G} & = \{X \in [0,1]^{n \times n} : X1_n = 1_n\} \\
\mathcal{H} & = \{X \in [0,1]^{n \times n} : X^\top1_n = 1_n\}.
\end{aligned}
\end{align}
In this case, the projector onto $\mathcal{G}$ (resp., $\mathcal{H}$) requires projecting each row (resp., column) onto the unit simplex separately. 
The arithmetic cost of projecting each row (resp., column) is $\mathcal{O}(n)$ \citep{condat2016fast}, and we can project multiple rows (resp., columns) in parallel. 

Two, we can consider the following scheme studied in \citep{zass2006doubly,lu2016fast,pedregosa2018adaptive}:
\begin{align}\tag{Split~2}\label{eqn:split2}
\begin{aligned}
\mathcal{G} & = [0,1]^{n \times n} \\
\mathcal{H} & = \{X \in \R^{n \times n} : X1_n = X^\top1_n = 1_n\}. 
\end{aligned}
\end{align}
In this case, the projection onto $\mathcal{G}$ truncates the entries and the projection onto $\mathcal{H}$ has a closed-form solution:
\begin{align*}
\proj_{\mathcal{H}}(X)  =  X  +  \left( \frac{1}{n} I  +  \frac{1_n^\top X 1_n}{n^2} I  -  \frac{1}{n} X \right) 1_n 1_n^\top  -  \frac{1}{n} 1_n 1_n^\top X,
\end{align*}
where $I$ denotes the identity matrix. We present a derivation of this projection operator in the supplementary material. 

\textbf{Rounding.} 
The solution of \eqref{eqn:QAP-relaxed} does not immediately yield a feasible point for QAP \eqref{eqn:QAP}. We need a rounding step. 

Suppose $X_\tau$ is a solution to \eqref{eqn:QAP-relaxed}. 
A natural strategy is choosing the closest permutation matrix to $X_\tau$. We can find this permutation matrix by solving                                                 
\begin{align}
\label{eqn:rouinding-2}
\begin{aligned}
& \max_{X \in \R^{n \times n}} & & \ip{X_\tau}{X} \\
& \mathrm{subj.~to} & & X \in [0,1]^{n \times n}, ~~ X1_n = X^\top1_n = 1_n. 
\end{aligned}
\end{align}
We present the derivation of this folklore formulation in the supplementary material. 
\eqref{eqn:rouinding-2} is an instance of LAP. Hence, it can be solved in $\mathcal{O}(n^3)$ arithmetic operations via the Hungarian method or the Jonker-Volgenant algorithm.

\subsection{Numerical Results}

\textbf{Implementation details.} 
For FW, we use the exact line-search (greedy) step-size as in \citep{vogelstein2015fast}. 
For solving LAP, we employ an efficient implementation of the Hungarian method \citep{Ciao2011HungarianMethod}.

For TOS (\Cref{alg:three-operator-splitting}), we output the last iterate instead of the random variable $x_\tau$. We use $\gamma_t = 1/L_f$ step-size ($L_f$ denotes the smoothness constant of $f$) instead of the more conservative step-size that our theory suggests (which depends on $T$). $1/L_f$ is the standard rule in convex optimization, and in our experience, it works well for nonconvex problems too. 

We start both methods from the same initial point $y_1$. We construct $y_1$ by projecting a random matrix with \textit{i.i.d.} standard Gaussian entries onto the Birkhoff polytope via $1000$ iterations of the alternating projections method.

\textbf{Quality of solution.} Given a prospective solution $X_t \in \mathcal{G}$, we compute the following errors:
\begin{align}
    \text{infeasibility err.} & = \frac{\mathrm{dist}(X_t,\mathcal{H})}{\sqrt{n}} \notag \\
    \text{nonstationarity err.}  & = \frac{| \max_{X \in \mathcal{G} \cap \mathcal{H}} \ip{\nabla f(X_t)}{X_t-X} | }{\max\{f(X_t),1\}} 
\end{align}
Infeasibility error is always $0$ for FW. 
We evaluate these errors only at iterations $t = 1,2,4,8,\ldots$ to avoid extra computation. 

We evaluate the quality of the rounded solution $\tilde{X}_t$ by using the following formula:
\begin{align} \label{eqn:assignment-error}
    \text{assignment err.} & = \frac{f(\tilde{X}_t) - f(\tilde{X}_{\text{best}})}{\max \{f(\tilde{X}_{\text{best}}),1\}},
\end{align}
where $\tilde{X}_{\text{best}}$ is the best solution known for \eqref{eqn:QAP}. $\tilde{X}_{\text{best}}$ is unknown in normal practice, but it is available for the QAPLIB benchmark problems.

\textbf{Observations.}
\Cref{fig:convergence-rates} compares the empirical performance of TOS and FW for solving \eqref{eqn:QAP-relaxed} with \textsf{chr12a} and \textsf{esc128} datasets from QAPLIB. 
In particular, \emph{TOS exhibits locally linear convergence}, whereas FW converges with sublinear rates. We observed qualitatively similar behavior also with the other datasets in QAPLIB. 

Computing the gradient dominates the runtime of TOS. 
Instead, for FW, the bottleneck is solving the LAP subproblems. As a result, TOS is especially advantageous against FW when $A$ and $B$ are sparse.

Next, we examine the quality of the rounded solutions we obtain after solving \eqref{eqn:QAP-relaxed} with TOS and FW. We initialize both methods from the same point and we stop them at the same level of accuracy, when infeasibility and nonstationarity errors drop below $10^{-5}$ (recall that the infeasibility error is always 0 for FW). We round the final estimates to the closest permutation matrix and evaluate the assignment error \eqref{eqn:assignment-error}. \Cref{fig:assignment-costs} presents the results of this experiment for the 134 datasets in QAPLIB. 

Remarkably, TOS gets a \emph{better} solution on 83 problems; TOS and FW perform the same on 16; and FW outperforms TOS on 35 instances. The largest margin appears on the \textsf{chr15b} dataset where TOS scores $0.744$ lower assignment error than FW. On the other extreme, the assignment error of the FW solution is $0.253$ lower than TOS on the \textsf{chr15c} dataset. On average (over datasets), TOS outperforms FW in assignment error by a margin of $0.046$.

\textbf{Computational environment.} Experiments are performed in \textsc{Matlab} R2018a on a MacBook Pro Late 2013 with 2.6 GHz Quad-Core Intel Core i7 CPU and 16 GB 1600 MHz DDR3 memory. The source code is available online\footnote{\url{https://github.com/alpyurtsever/NonconvexTOS}}.

\textbf{Other solvers for QAP.} The literature covers numerous approaches for tackling QAP, including \emph{(i)} exact solution methods with branch-and-bound, dynamic programming, and cutting plane methods, \emph{(ii)} heuristics and metaheuristics based on local and tabu search, simulated annealing, genetic algorithms, and neural networks, and \emph{(iii)} lower bound approximation methods via spectral bounds, mixed-integer linear programming, and semidefinite programming relaxations. An extensive comparison with these methods is beyond the scope of our paper. We refer to the comprehensive survey of \citet{loiola2007survey} for more details.

\begin{figure*}[p]
\begin{center}
\includegraphics[width=0.9\textwidth]{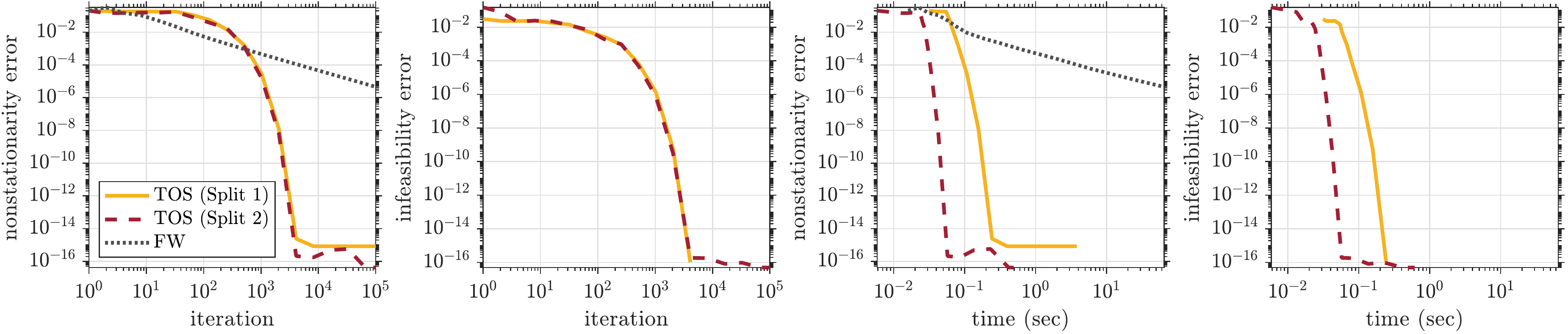}\\[1em]
\includegraphics[width=0.9\textwidth]{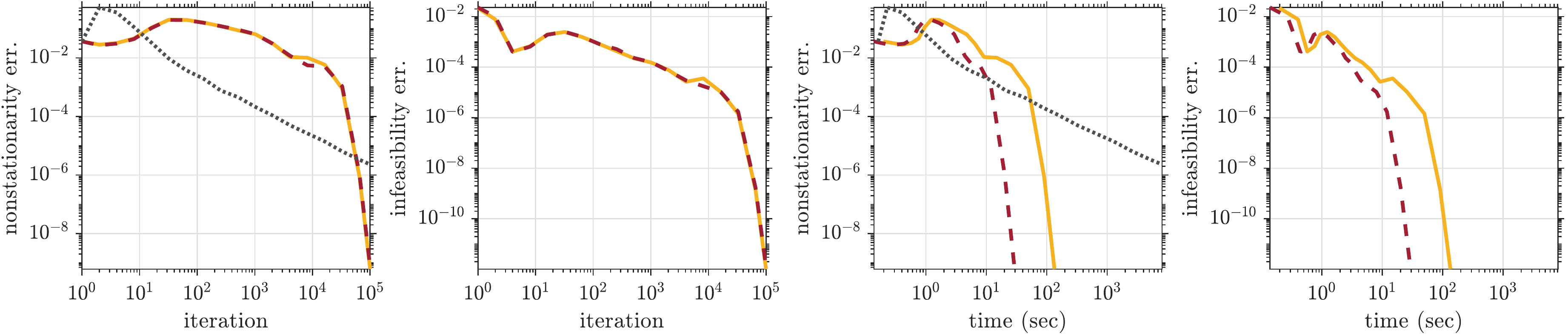}
\caption{Empirical convergence of TOS for two different formulations (\eqref{eqn:split1} and \eqref{eqn:split2}) compared against FW for solving the relaxed QAP formulation \eqref{eqn:QAP-relaxed}. The [top] row corresponds to the results for the \textsf{chr12a} dataset and the [bottom] row for the \textsf{esc128} dataset (from QAPLIB). In both cases, TOS exhibits locally linear convergence whereas FW converges sublinearly. }
\label{fig:convergence-rates}
\end{center}
\vskip 0.3in
\begin{center}
\includegraphics[width=0.9\textwidth]{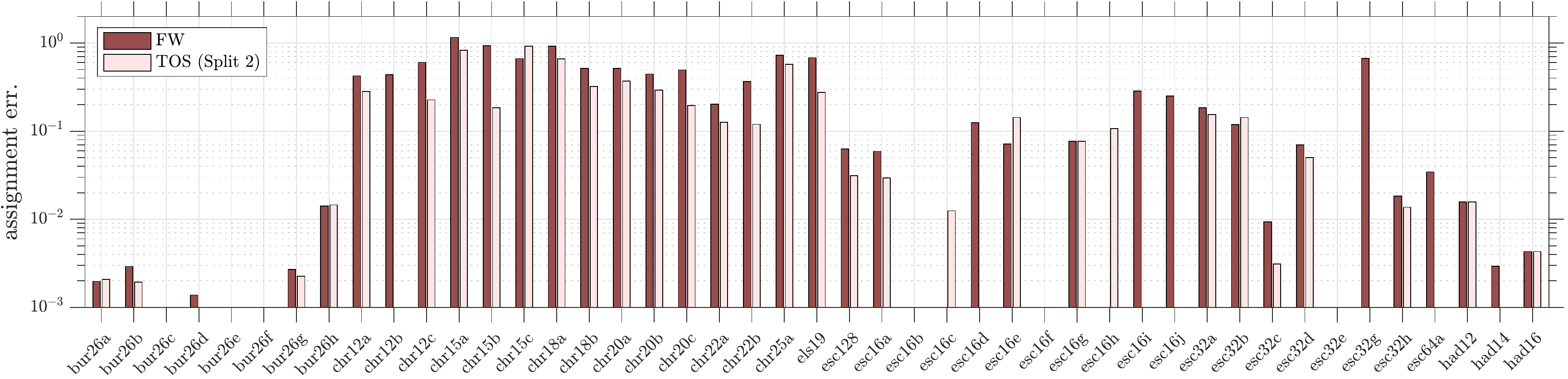}\\[0.5em]
\includegraphics[width=0.9\textwidth]{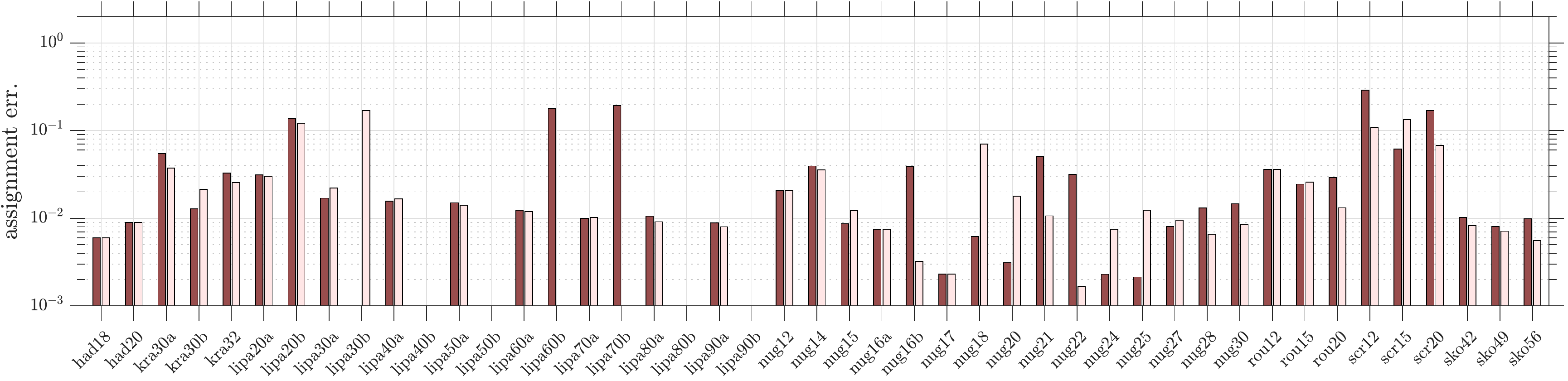}\\[0.5em]
\includegraphics[width=0.9\textwidth]{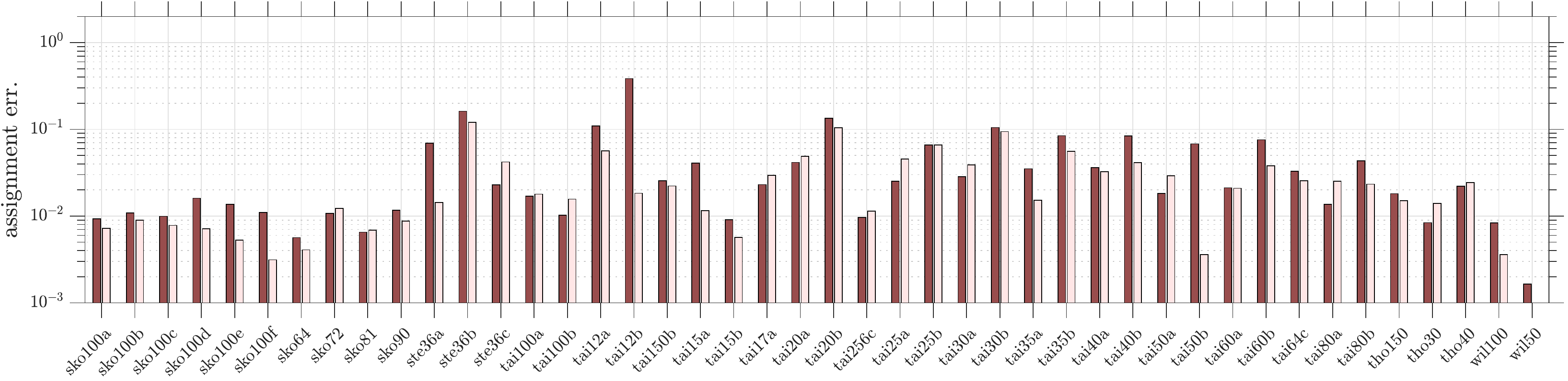}
\caption{Assignment cost (see \eqref{eqn:assignment-error}) achieved by FW and TOS with the relax-and-round strategy for solving QAP. Smaller values are better, zero means a perfect estimation. Out of 134 QAP instances in the QAPLIB library, TOS outperforms FW on 83 problems; FW is better on 35; and the two methods get the same results on 16 instances.
}
\label{fig:assignment-costs}
\end{center}
\vskip -0.3in
\end{figure*}

\section{Conclusions}
\label{sec:conclusion}

We establish the convergence guarantees of TOS for minimizing the sum of three functions, one differentiable but potentially nonconvex and two convex but potentially nonsmooth. In contrast with the existing results, our analysis permits both nonsmooth terms to be indicator functions. Moreover, we extend our analysis for stochastic problems where we have access only to an unbiased estimator of the gradient of the differentiable term.

We present numerical experiments on QAPs. The empirical performance of the proposed method is promising. In our experience, the method converges to a stationary point with locally linear rates.  

We conclude our paper with a short list of open questions and follow-up directions: \\[0.25em]
\emph{(i)}~We assume that $\dom{g}$ is bounded. This assumption is needed in our analysis since \Cref{def:epsilon-stationary} requires \eqref{eqn:epsilon-stationarity} to hold for all $x$ in $\dom{g}$. We can potentially drop this assumption by adopting a relaxed notion of stationarity where the inequality holds only on a feasible neighborhood of $\bar{z}$. Such measures are used in recent works for different problem models, \emph{e.g.}, see Definition~2.3 in \citep{nouiehed2019solving} and Definition~1 in \citep{song2020optimistic}. \\[0.25em]
\emph{(ii)}~We did not explicitly use the smoothness of the differentiable term in our analysis. One can potentially derive tighter guarantees by using the smoothness or under additional assumptions such as the Kurdyka-\L{}ojasiewicz property. \\[0.25em]
\emph{(iii)}~For the stochastic setting, we can improve the stochastic gradient complexity by using variance reduction techniques. \\[0.25em]
\emph{(iv)}~Developing an efficient implementation (that benefits from parallel computation) with an aim to investigate the full potential of TOS for solving QAP and other nonconvex problems such as the constrained and regularized neural networks is an important piece of future work.  

\begin{appendices}

\setcounter{equation}{0}
\renewcommand{\theequation}{S.\arabic{equation}}

\section{Algorithm}

We focus on the Three Operator Splitting (TOS) method \citep{davis2017three}. We initialize the method from an arbitrary $y_1 \in \dom{g}$. Then, the algorithm performs the following steps for $t = 1,2,\ldots, T$:
\begin{align}
z_{t} & = \prox_{\gamma_t g} (y_t) \label{eqn:algorithm-step-1} \\
x_{t} & = \prox_{\gamma_t h} (2z_t - y_t - \gamma_t u_t) \label{eqn:algorithm-step-2} \\
y_{t+1} & = y_t - z_t + x_t \label{eqn:algorithm-step-3}
\end{align}
After $T$ iterations, the algorithm draws $\tau$ uniformly at random from $\{1,2,\ldots,T\}$ and returns $z_\tau$. 
We use $u_t = \nabla f(z_t)$ in the deterministic setting and $u_t$ is an unbiased estimator of $\nabla f(z_t)$ in the stochastic case. 
\vspace{-0.25em}

\subsection{TOS for Minimizing the Sum of More Than Three Functions}

When the objective function consists of more than three terms, we can construct an equivalent formulation with three terms via a simple product space technique \citep{briceno2015forward}, see \Cref{sec:extensions-for-more-than-three-terms} for the details. 
An application of TOS for solving this product space reformulation is detailed in \Cref{alg:three-operator-splitting-arbitrary-terms}.

\begin{algorithm}[h]
\setstretch{1.15}
  \caption{TOS for Problem \eqref{eqn:model-problem-arbitrary-terms} via product space formulation \eqref{eqn:model-problem-product-space}}
  \label{alg:three-operator-splitting-arbitrary-terms}
\begin{algorithmic}
  \STATE {\bfseries Input:} Initial points $y_1^{(i)} \in \mathbb{R}^n$ for $i = 0,1,\ldots,m$, and a step-size sequence $\{\gamma_t\}_{t=1}^T$ \\
  \FOR{$t=1,2,\ldots,T$}
  \FOR{$i=0,1,2,\ldots,m$}
  \STATE $z_{t}^{(i)} = \prox_{\gamma_t g_i} (y_t^{(i)})$
  \ENDFOR\\
\STATE $x_{t} = \frac{1}{m+1}\Big( \sum_{i=0}^m \big( 2z_t^{(i)} - y_t^{(i)} \big) - \gamma_t \nabla f(z_t^{(0)}) \Big)$
  \FOR{$i=0,1,2,\ldots,m$}
  \STATE $y_{t+1}^{(i)} = y_t^{(i)} - z_t^{(i)} + x_t $
  \ENDFOR\\
  \ENDFOR\\
  \STATE {\bfseries Return:} Draw $\tau$ uniformly at random from $\{1,2,\ldots, T\}$ and output ${z}_\tau$.
\end{algorithmic}
\end{algorithm}

\vspace{-0.5em}

\section{Preliminaries}

\begin{lemma}[Prox-theorem] \label{lem:prox-theorem}
Let $f: \mathbb{R}^n \to \mathbb{R} \cup \{+\infty\}$ be a proper closed and convex function. Then, for any $x, u \in \mathbb{R}^n$, the followings are equivalent:
\begin{enumerate}[label=(\roman*),topsep=0pt,itemsep=0ex,partopsep=1ex,parsep=1ex]
\item $y = \prox_f(x)$.
\item $x - y \in \partial f(y)$.
\item $\ip{x-y}{z-y} \leq f(z) - f(y)$ ~for any~ $z \in \dom{f}$. \\[-0.5em]
\end{enumerate}
\end{lemma}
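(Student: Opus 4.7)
The plan is to prove the equivalence by establishing the cycle (i) $\Rightarrow$ (ii) $\Leftrightarrow$ (iii) $\Rightarrow$ (i), each step relying on standard convex analysis. The variable $u$ appears in the quantification but not in the three conditions, so I will read the statement as asserting equivalence of the three properties for each fixed $x$ (treating $u$ as vacuous or a typo for $y$).

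First I would recall that the function $z \mapsto f(z) + \tfrac{1}{2}\|x - z\|^2$ is proper, closed, and $1$-strongly convex, so it admits a unique minimizer, namely $y = \prox_f(x)$. For (i) $\Rightarrow$ (ii), I would apply Fermat's rule: at the minimizer of a sum of a convex function and a differentiable convex function, $0$ lies in the subdifferential of the sum. Since the quadratic term is differentiable with gradient $y - x$, the sum rule yields $0 \in \partial f(y) + (y - x)$, which rearranges to $x - y \in \partial f(y)$.

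For (ii) $\Leftrightarrow$ (iii), I would invoke the defining inequality of the subdifferential of a proper convex function: a vector $v$ belongs to $\partial f(y)$ if and only if $f(z) \geq f(y) + \ip{v}{z - y}$ for all $z \in \R^n$ (equivalently for all $z \in \dom{f}$, since the inequality is trivial when $f(z) = +\infty$). Setting $v = x - y$ gives the claimed equivalence verbatim.

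For (iii) $\Rightarrow$ (i), I would run the implications in reverse: (iii) gives $x - y \in \partial f(y)$ by the subdifferential characterization, hence $0 \in \partial f(y) + (y - x) = \partial\bigl(f(\cdot) + \tfrac{1}{2}\|x - \cdot\|^2\bigr)(y)$. Since this sum is convex, this inclusion is both necessary and sufficient for $y$ to be the (unique) global minimizer, so $y = \prox_f(x)$. No step poses a real obstacle; the only care needed is to justify the subdifferential sum rule for the smooth-plus-nonsmooth objective and to note uniqueness of the prox, which follows from strict (indeed strong) convexity of the quadratic perturbation.
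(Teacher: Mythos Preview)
Your argument is correct and complete: the cycle (i) $\Rightarrow$ (ii) $\Leftrightarrow$ (iii) $\Rightarrow$ (i) via Fermat's rule and the subdifferential inequality is the standard route, and your remark that $u$ is vacuous (likely a typo for $y$) is apt. The paper does not actually prove this lemma; it states that \Cref{lem:prox-theorem} is elementary and skips the proof, so there is no alternative approach to compare against.
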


\begin{lemma}[Cosine rule] \label{lem:cosine-rule}
The following identity holds for any $x,y \in \mathbb{R}^n$:
\begin{align*}
\ip{x}{y} = \frac{1}{2} \norm{x+y}^2 - \frac{1}{2} \norm{x}^2 - \frac{1}{2} \norm{y}^2.
\end{align*}
\end{lemma}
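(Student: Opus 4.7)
The statement is the elementary polarization identity for the Euclidean inner product, so the proof will be a one-line algebraic verification rather than anything structural. My plan is to start from the definition of the norm in terms of the inner product, namely $\norm{v}^2 = \ip{v}{v}$, apply it to $v = x+y$, and then expand via bilinearity and symmetry of the inner product.

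Concretely, I would write
\begin{equation*}
\norm{x+y}^2 = \ip{x+y}{x+y} = \ip{x}{x} + \ip{x}{y} + \ip{y}{x} + \ip{y}{y} = \norm{x}^2 + 2\ip{x}{y} + \norm{y}^2,
\end{equation*}
using bilinearity in the middle equality and the symmetry $\ip{x}{y} = \ip{y}{x}$ of the Euclidean inner product to combine the two cross terms. Rearranging this chain of equalities immediately yields
\begin{equation*}
\ip{x}{y} = \tfrac{1}{2}\norm{x+y}^2 - \tfrac{1}{2}\norm{x}^2 - \tfrac{1}{2}\norm{y}^2,
\end{equation*}
which is exactly the claimed identity.

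There is no genuine obstacle here: the only things I need are the definition $\norm{\cdot}^2 = \ip{\cdot}{\cdot}$, bilinearity, and symmetry of $\ip{\cdot}{\cdot}$, all of which are standard properties of the Euclidean inner product fixed in the notation paragraph of the paper. So the proof is essentially a two-line display: one line expanding $\norm{x+y}^2$, and one line solving for $\ip{x}{y}$. No inequality, no case analysis, and no appeal to prior lemmas is required.
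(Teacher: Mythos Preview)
Your proposal is correct; this is exactly the standard one-line expansion of $\norm{x+y}^2$ via bilinearity and symmetry of the inner product. The paper itself does not give a proof of this lemma at all---it simply declares it elementary and skips it---so there is nothing to compare against, and your argument is a perfectly acceptable way to fill that gap.
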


\begin{lemma}
\label{lem:variance-lemma}
Suppose $f(x) = \mathbb{E}_\xi [\tilde{f} (x,\xi)]$ where $\xi$ is a random variable with distribution $\mathcal{P}$. 
Assume \\[0.25em]
\emph{(i)}~$\nabla \tilde{f} (x,\xi)$ is an unbiased estimator of $\nabla f(x)$, \emph{i.e.}, $\mathbb{E}_\xi [\nabla \tilde{f}(x,\xi)] = \nabla f(x), ~ \forall x \in \R^n,$ \\[0.25em]
\emph{(ii)}~$\nabla \tilde{f} (x,\xi)$ has bounded variance, \emph{i.e.}, there exists $\sigma < +\infty$ such that $\mathbb{E}_\xi [\norm{\nabla \tilde{f}(x,\xi) - \nabla f(x)}^2] \leq \sigma^2, ~ \forall x \in \R^n.$ \\[0.25em]
Let $u = \frac{1}{|Q|} \sum_{\xi \in Q} \nabla \tilde{f}(x,\xi)$, 
where $Q$ is a set of $|Q|$ \textit{i.i.d.} realizations from $\mathcal{P}$. 
Then, $u$ is an unbiased estimator of $\nabla f(x)$, and it satisfies the following variance bound:
\vspace{-0.25em}
\begin{equation*}
        \mathbb{E}_{Q} [\norm{u - \nabla f(x)}^2] \leq \frac{\sigma^2}{|Q|}.
\end{equation*}
\end{lemma}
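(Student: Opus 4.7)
The plan is to handle the two claims (unbiasedness and the variance bound) in sequence via direct computation, as is standard for mini-batch estimators averaging i.i.d.\ unbiased samples.

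For unbiasedness, I would apply linearity of expectation to the definition of $u$. Since each $\xi \in Q$ is drawn from $\mathcal{P}$ and assumption \emph{(i)} gives $\mathbb{E}_\xi[\nabla \tilde{f}(x,\xi)] = \nabla f(x)$, we get $\mathbb{E}_Q[u] = \frac{1}{|Q|}\sum_{\xi \in Q} \mathbb{E}_\xi[\nabla \tilde{f}(x,\xi)] = \nabla f(x)$. This part is essentially immediate from the definition.

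For the variance bound, the main step is to expand the squared norm of a sum of zero-mean i.i.d.\ error vectors. Define $e_\xi := \nabla \tilde{f}(x,\xi) - \nabla f(x)$ for each $\xi \in Q$; by assumption \emph{(i)} each $e_\xi$ is zero-mean, and by assumption \emph{(ii)} satisfies $\mathbb{E}[\|e_\xi\|^2] \leq \sigma^2$. Since $u - \nabla f(x) = \frac{1}{|Q|}\sum_{\xi \in Q} e_\xi$, I would write
\begin{equation*}
\|u - \nabla f(x)\|^2 = \frac{1}{|Q|^2}\sum_{\xi \in Q}\sum_{\xi' \in Q}\ip{e_\xi}{e_{\xi'}},
\end{equation*}
take expectation term by term, and split the sum into diagonal ($\xi = \xi'$) and off-diagonal ($\xi \neq \xi'$) contributions. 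The diagonal terms contribute at most $|Q|\sigma^2$, and the off-diagonal terms vanish: for $\xi \neq \xi'$, independence of $e_\xi$ and $e_{\xi'}$ together with $\mathbb{E}[e_\xi] = \mathbb{E}[e_{\xi'}] = 0$ yields $\mathbb{E}[\ip{e_\xi}{e_{\xi'}}] = \ip{\mathbb{E}[e_\xi]}{\mathbb{E}[e_{\xi'}]} = 0$. Dividing by $|Q|^2$ then produces the claimed bound $\sigma^2/|Q|$.

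There is no genuine obstacle here; the only point worth writing carefully is the vanishing of the cross terms, since that is exactly where the i.i.d.\ assumption is used. Everything else is linearity of expectation and the per-sample variance bound from assumption \emph{(ii)}.
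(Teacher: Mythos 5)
Your proof is correct and is exactly the standard argument: unbiasedness by linearity of expectation, and the variance bound by expanding $\|u-\nabla f(x)\|^2$ into diagonal and cross terms, with the cross terms vanishing by independence and zero mean. The paper does not spell out a proof (it defers to Lemma~2 of \citet{yurtsever2019conditional} and to \citet{reddi2016stochastic}), and the argument given there is the same computation you propose, so there is no substantive difference.
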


\Cref{lem:prox-theorem} and \Cref{lem:cosine-rule} are elementary, we skip the proofs. 
For the proof of \Cref{lem:variance-lemma}, we refer to Lemma~2 in  \citep{yurtsever2019conditional} or \citep{reddi2016stochastic}.

\begin{lemma}[Stationary point] \label{lem:stationary-point}
$\bar{z} \in \mathrm{dom}(\phi)$ is a first-order stationary point of \eqref{eqn:model-problem} if and only if
\begin{align}
    \ip{\nabla f(\bar{z})}{\bar{z} - x} + g(\bar{z}) - g(x) + h(\bar{z}) - h(x) \leq 0, \qquad \forall  x \in \mathrm{dom}(\phi).
    \label{eqn:stationary-point-lemma}
\end{align}
\end{lemma}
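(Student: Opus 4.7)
The plan is to interpret ``first-order stationary point of \eqref{eqn:model-problem}'' in the usual composite-minimization sense, namely that $0$ lies in the (Fréchet/limiting) subdifferential of $\phi = f + g + h$ at $\bar{z}$. Since $f$ is continuously differentiable and $g + h$ is proper, closed, and convex, this reduces to the inclusion
\[
-\nabla f(\bar{z}) \in \partial(g+h)(\bar{z}),
\]
and I would take this inclusion as the working definition of stationarity to be matched with the variational inequality \eqref{eqn:stationary-point-lemma}.

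For the direction ($\Rightarrow$), I would apply the convex subgradient inequality directly: if $-\nabla f(\bar{z}) \in \partial(g+h)(\bar{z})$, then for every $x \in \dom{g+h} = \dom{\phi}$,
\[
(g+h)(x) - (g+h)(\bar{z}) \ge \ip{-\nabla f(\bar{z})}{x - \bar{z}}.
\]
Rearranging this inequality yields exactly \eqref{eqn:stationary-point-lemma}. For the direction ($\Leftarrow$), I would start from \eqref{eqn:stationary-point-lemma} and rewrite it as
\[
(g+h)(x) \ge (g+h)(\bar{z}) + \ip{-\nabla f(\bar{z})}{x - \bar{z}} \qquad \forall x \in \dom{\phi},
\]
which (since both sides are $+\infty$ for $x \notin \dom{\phi}$) is the defining inequality for $-\nabla f(\bar{z}) \in \partial(g+h)(\bar{z})$, giving back the inclusion $0 \in \partial\phi(\bar{z})$ after adding $\nabla f(\bar{z}) \in \partial f(\bar{z})$.

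The only delicate point I anticipate is the book-keeping between $\partial(g+h)$ and $\partial g + \partial h$: the variational form involves $g$ and $h$ separately, whereas the subdifferential definition only gives their sum. This is not actually an obstacle for the equivalence stated in the lemma, because \eqref{eqn:stationary-point-lemma} couples $g$ and $h$ only additively, so the proof goes through with the combined subdifferential of $g + h$. If one wanted to separate the subdifferentials (to read the condition as $-\nabla f(\bar{z}) \in \partial g(\bar{z}) + \partial h(\bar{z})$), one would invoke the Moreau–Rockafellar sum rule under a standard qualification such as $\mathrm{ri}(\dom g) \cap \mathrm{ri}(\dom h) \neq \emptyset$, but this refinement is not needed for the equivalence as stated. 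I would also briefly note that $\dom(g+h) = \dom{g} \cap \dom{h} = \dom{\phi}$ (because $f$ is finite everywhere on $\R^n$), which is what makes the quantifier ``for all $x \in \dom{\phi}$'' line up with the subgradient inequality on the effective domain of $g+h$.
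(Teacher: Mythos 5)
Your argument is correct, but it takes a genuinely different route from the paper, and the difference lies entirely in which definition of ``first-order stationary point'' is taken as primitive. You adopt the aggregated inclusion $0 \in \partial\phi(\bar z)$, i.e.\ $-\nabla f(\bar z) \in \partial(g+h)(\bar z)$, under which the lemma is essentially a restatement: the variational inequality \eqref{eqn:stationary-point-lemma} is literally the subgradient inequality for the convex function $g+h$ at $\bar z$, so both implications are one-line rearrangements, and you rightly observe that no sum rule or qualification condition is needed because the inequality only ever sees $g+h$ additively. The paper instead works with a directional definition: $\bar z$ is stationary if for every feasible direction $s$ there exist $u \in \partial g(\bar z)$ and $v \in \partial h(\bar z)$ (allowed to depend on $s$) with $\ip{\nabla f(\bar z) + u + v}{s} \geq 0$. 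With that definition the forward implication uses the convexity of $g$ and $h$ separately via $g(\bar z)-g(x) \leq \ip{u}{\bar z - x}$ and $h(\bar z)-h(x) \leq \ip{v}{\bar z - x}$, and the reverse implication is where the real work sits: one sets $x = \bar z + \alpha s$, divides by $\alpha$, lets $\alpha \to 0^+$, and invokes the support-function representation $g'(\bar z,s) = \sup_{u\in\partial g(\bar z)}\ip{u}{s}$ (and likewise for $h$) to recover direction-dependent subgradients. What each approach buys: yours is shorter and avoids the directional-derivative machinery entirely, at the price of making the lemma nearly tautological relative to your chosen definition; the paper's version carries genuine content because it reconciles the VI with a finer, per-function directional notion of stationarity without ever needing $\partial(g+h) = \partial g + \partial h$ globally (only direction-wise, via the support functions). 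If you wanted your proof to certify the paper's notion rather than the aggregated one, you would indeed need either the Moreau--Rockafellar splitting you mention or the paper's directional argument, so it is worth stating explicitly which definition you are matching.
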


\begin{proof}
Recall the definition of a stationary point: 
$\bar{z}$ is a stationary point of $f+g+h$ if  for all feasible directions $s$ at $\bar{z}$ there exist $u \in \partial g(\bar{z})$ and $v \in \partial h(\bar{z})$ such that
$\ip{\nabla f(\bar{z}) + u + v}{s} \geq 0$. Therefore, 
\begin{align}
\ip{\nabla f(\bar{z}) + u + v}{\bar{z} - x} \leq 0, \qquad \forall x \in \dom{\phi}. 
\label{eqn:stationary-point-def}
\end{align}
Since $g$ and $h$ are convex, $g(\bar{z}) - g(x) \leq \ip{u}{\bar{z} - x}$ and $h(\bar{z}) - h(x) \leq \ip{v}{\bar{z} - x}$. Hence, \eqref{eqn:stationary-point-def} implies \eqref{eqn:stationary-point-lemma}.

Next, we show the opposite direction. First, we recall the definition of the directional derivative. Given a point $\bar{z}$ and a feasible direction $s$, the directional derivative is 
\begin{align*}
g'(\bar{z},s) = \lim_{\alpha \to 0^+} \frac{g(\bar{z} + \alpha s) - g(\bar{z})}{\alpha}.
\end{align*}
Since $g$ and $h$ are convex, we have 
$g'(\bar{z},s) = \sup_{u \in \partial{g}(\bar{z})} \ip{u}{s}$ and $h'(\bar{z},s) = \sup_{v \in \partial{h}(\bar{z})} \ip{v}{s}$.

Now, suppose \eqref{eqn:stationary-point-lemma} holds. We choose $x = \bar{z} + \alpha s$ where $s$ is an arbitrary feasible direction and $\alpha > 0$, and we get
\begin{align*}
    \ip{\nabla f(\bar{z})}{\alpha s} + g(\bar{z} + \alpha s) - g(\bar{z}) + h(\bar{z} + \alpha s) - h(\bar{z}) \geq 0.
\end{align*}
We divide both sides by $\alpha$ and take $\alpha \to 0^+$,
\begin{align*}
    \ip{\nabla f(\bar{z})}{s} + \sup_{u \in \partial{g}(\bar{z})} \ip{u}{s} + \sup_{v \in \partial{h}(\bar{z})} \ip{v}{s} \geq 0.
\end{align*}
Hence, for any feasible direction $s$, there exist $u \in \partial g(\bar{z})$ and $v \in \partial h(\bar{z})$ such that
$\ip{\nabla f(\bar{z}) + u + v}{s} \geq 0$, which means $\bar{z}$ is a stationary point.
\end{proof}

\section{The Key Lemma of TOS}

The following lemma presents the common part of the analyses for the different settings that we consider in our paper. 

\begin{lemma} \label{lem:main-lemma-nonconvex-TOS}
Consider the model problem \eqref{eqn:model-problem}. Assume that the domain of $g$ is bounded by the diameter $D_g$, \emph{i.e.}, 
$\norm{x-y} \leq D_g$ for all $x, y \in \dom{g}.$
For simplicity, we suppose $y_1 \in \mathcal{G}$ (otherwise the bound explicitly depends on $y_1$). 
Then, $z_\tau$ returned by TOS after $T$ iterations with a fixed step-size $\gamma_t = \gamma$ satisfies, for all $x \in \dom{\phi}$,
\begin{align} \label{eqn:non-convex-main-bound}
\mathbb{E}_\tau [\ip{u_\tau}{x_\tau - x} + g(z_\tau) - g(x) + h(x_\tau) - h(x)] + \frac{1}{2\gamma} \mathbb{E}_\tau [ \norm{x_\tau - z_\tau}]^2
& \leq \frac{D_g^2}{2\gamma T}.
\end{align}
\end{lemma}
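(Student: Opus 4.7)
The plan is to derive a per-iteration inequality by invoking the prox-theorem (\Cref{lem:prox-theorem}) on both proximal updates in TOS, then telescope the inequality across $t = 1, \ldots, T$, and finally average using the uniform random index $\tau$.

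First I would apply \Cref{lem:prox-theorem} to the update $z_t = \prox_{\gamma g}(y_t)$: for every $x \in \dom{g}$,
\begin{equation*}
\gamma g(z_t) - \gamma g(x) \leq \ip{y_t - z_t}{z_t - x}.
\end{equation*}
Similarly, applying it to $x_t = \prox_{\gamma h}(2z_t - y_t - \gamma u_t)$ yields, for every $x \in \dom{h}$,
\begin{equation*}
\gamma h(x_t) - \gamma h(x) + \gamma \ip{u_t}{x_t - x} \leq \ip{2z_t - y_t - x_t}{x_t - x}.
\end{equation*}
Adding these two inequalities gives a bound on $\gamma\bigl(\ip{u_t}{x_t - x} + g(z_t) - g(x) + h(x_t) - h(x)\bigr)$ whose right-hand side is the sum of two inner products.

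Next, the key algebraic step is to rewrite that right-hand side using the update rule $y_{t+1} = y_t - z_t + x_t$, i.e., $x_t - z_t = y_{t+1} - y_t$. Splitting $z_t - x = (z_t - x_t) + (x_t - x)$ in the first inner product and collecting terms against $x_t - x$, I expect to obtain
\begin{equation*}
\ip{y_t - z_t}{z_t - x} + \ip{2z_t - y_t - x_t}{x_t - x} = \ip{x - y_{t+1}}{x_t - z_t}.
\end{equation*}
Then an application of the cosine rule (\Cref{lem:cosine-rule}) in the form $\ip{a}{b-a} = \tfrac{1}{2}(\norm{b}^2 - \norm{a}^2 - \norm{b-a}^2)$, with $a = x - y_{t+1}$ and $b - a = x_t - z_t$, converts this inner product to $\tfrac{1}{2}(\norm{x - y_t}^2 - \norm{x - y_{t+1}}^2 - \norm{x_t - z_t}^2)$. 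Dividing through by $\gamma$ produces the per-iteration inequality highlighted in the proof sketch of \Cref{thm:TOS-Lipschitz}:
\begin{equation*}
\ip{u_t}{x_t - x} + g(z_t) - g(x) + h(x_t) - h(x) + \tfrac{1}{2\gamma}\norm{x_t - z_t}^2 \leq \tfrac{1}{2\gamma}\bigl(\norm{y_t - x}^2 - \norm{y_{t+1} - x}^2\bigr).
\end{equation*}

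To finish, I would sum this inequality over $t = 1, \ldots, T$ so that the $\norm{y_t - x}^2$ terms telescope, leaving only $\tfrac{1}{2\gamma}(\norm{y_1 - x}^2 - \norm{y_{T+1} - x}^2) \leq \tfrac{1}{2\gamma}\norm{y_1 - x}^2$. Dividing by $T$ turns the average over iterates into the expectation over $\tau$ drawn uniformly from $\{1,\ldots,T\}$. Since both $x$ and $y_1$ lie in $\dom{g}$, the boundedness assumption gives $\norm{y_1 - x}^2 \leq D_g^2$. Finally, since the squared-norm term appears as $\mathbb{E}_\tau[\norm{x_\tau - z_\tau}^2]$ after averaging, Jensen's inequality $\mathbb{E}_\tau[\norm{x_\tau - z_\tau}]^2 \leq \mathbb{E}_\tau[\norm{x_\tau - z_\tau}^2]$ yields the stated weaker form in \eqref{eqn:non-convex-main-bound}. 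The main obstacle is the algebraic manipulation that collapses the two prox inequalities into a single inner product of the form $\ip{x - y_{t+1}}{x_t - z_t}$; the rest is a routine telescoping argument combined with Jensen's inequality.
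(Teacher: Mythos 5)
Your proposal is correct and follows essentially the same route as the paper's proof: both prox inequalities via \Cref{lem:prox-theorem}, the algebraic collapse to $\ip{x - y_{t+1}}{x_t - z_t}$ (the paper writes this as $\ip{y_{t+1}-x}{y_t - y_{t+1}}$, which is the same quantity since $z_t - x_t = y_t - y_{t+1}$), the cosine rule, telescoping with $\norm{y_1 - x} \leq D_g$, and Jensen's inequality at the end. The only difference is cosmetic: you add the two prox inequalities before simplifying, whereas the paper substitutes the $g$-prox bound after decomposing the $h$-prox term.
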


\begin{proof}
From the algorithm step \eqref{eqn:algorithm-step-2} and \Cref{lem:prox-theorem}, we have  
\begin{equation*}
\ip{2z_t - y_t - \gamma_t u_t - x_t}{x - x_t} \leq \gamma_t h(x) - \gamma_t h(x_t), \quad \forall x \in \dom{h}.
\end{equation*}
We rearrange this inequality as follows:
\begin{align*}
\ip{u_t}{x_t - x} + h(x_t) - h(x)
& \leq \frac{1}{\gamma_t} \ip{2z_t - y_t -x_t}{x_t - x}  \\
& = \frac{1}{\gamma_t} \ip{z_t - y_t }{z_t - x} + \frac{1}{\gamma_t} \ip{z_t - y_t }{x_t - z_t} +  \frac{1}{\gamma_t} \ip{z_t - x_t}{x_t - x}  \\
& = \frac{1}{\gamma_t} \ip{z_t - y_t }{z_t - x} + \frac{1}{\gamma_t} \ip{y_t - z_t  + x_t - x}{z_t - x_t}  \\
& = \frac{1}{\gamma_t} \ip{z_t - y_t }{z_t - x} + \frac{1}{\gamma_t} \ip{y_{t+1} - x}{y_t - y_{t+1}}, \quad \forall x \in \dom{h}.
\end{align*}
Next, we bound the first term on the right hand side by using \Cref{lem:prox-theorem} again (but this time for the update step \eqref{eqn:algorithm-step-1}):
\begin{equation*}
\ip{u_t}{x_t - x} + g(z_t) - g(x) + h(x_t) - h(x) \leq \frac{1}{\gamma_t} \ip{y_{t+1} - x}{y_t - y_{t+1}}, \quad \forall x \in \dom{\phi}.
\end{equation*}
Due to \Cref{lem:cosine-rule} and the update step \eqref{eqn:algorithm-step-3}, we get, for all $x \in \dom{\phi}$
\begin{align*}
\ip{u_t}{x_t - x} + g(z_t) - g(x) + h(x_t) - h(x)
& \leq \frac{1}{2\gamma_t} \Big( \norm{y_t - x}^2 - \norm{y_{t+1} - x}^2 - \norm{y_{t+1} - y_t}^2 \Big) \\
& = \frac{1}{2\gamma_t} \Big( \norm{y_t - x}^2 - \norm{y_{t+1} - x}^2 - \norm{x_t - z_t}^2 \Big).
\end{align*}
We use a fixed step-size $\gamma_t = \gamma > 0$, sum this inequality over $t=1,2,\ldots,T$ and divide by $T$:
\begin{align} \label{eqn:main-proof-after-telescope}
\frac{1}{T} \sum_{t=1}^T \left( \ip{u_t}{x_t - x} + g(z_t) - g(x) + h(x_t) - h(x) + \frac{1}{2\gamma} \norm{x_t - z_t}^2  \right)
& \leq \frac{1}{2\gamma T} \norm{y_1 - x}^2.
\end{align}
Note that, $\norm{y_1 - x} \leq D_g$ since both $x$ and $y_1$ are in $\dom{g}$. 
We choose $\tau$ uniformly at random from $1,2,\ldots,T$. By definition of the expectation, we get
\begin{align*}
\mathbb{E}_\tau [\ip{u_\tau}{x_\tau - x} + g(z_\tau) - g(x) + h(x_\tau) - h(x)] + \frac{1}{2\gamma} \mathbb{E}_\tau [ \norm{x_\tau - z_\tau}^2 ]
& \leq \frac{D_g^2}{2\gamma T}, \quad \forall x \in \dom{\phi}.
\end{align*}
We complete the proof by using $\mathbb{E} [ \norm{x_\tau - z_\tau} ]^2 \leq \mathbb{E} [ \norm{x_\tau - z_\tau}^2 ]$. 
\end{proof}

\clearpage

\section{Proof of \Cref{thm:TOS-Lipschitz} and \Cref{thm:TOS-indicator}}
\label{app:proof-main-thm}

\textbf{\Cref{thm:TOS-Lipschitz}.}
Consider the model problem \eqref{eqn:model-problem} under the following assumptions:\\[0.5em]
\emph{(i)}~The domain of $g$ is bounded with diameter $D_g$, \emph{i.e.}, $ \norm{x-y} \leq D_g, ~ \forall x, y \in \dom{g}$. \\[0.25em]
\emph{(ii)}~$g$ is $L_g$-Lipschitz continuous on its domain, \emph{i.e.}, $g(x) - g(y) \leq L_g \norm{x-y}, ~ \forall x, y \in \dom{g}$. \\[0.25em]
\emph{(iii)}~The gradient of $f$ is bounded by $G_f$ on the domain of $g$,  \emph{i.e.}, $\norm{\nabla f(x)} \leq G_f, ~ \forall x \in \dom{g}$. \\[0.25em]
\emph{(iv)}~$h$ is $L_h$-Lipschitz continuous on $\R^n$, \emph{i.e.}, $ h(x) - h(y) \leq L_h \norm{x-y}, ~ \forall x, y \in \R^n.$ \\[0.5em]
Choose $y_1 \in \dom{g}$. 
Then, $z_\tau$ returned by TOS after $T$ iterations with the fixed step-size $\gamma = \frac{D_g}{2 (G_f + L_g + L_h) T^{2/3}}$ satisfies
\begin{multline*}
    \mathbb{E}_{\tau}[\ip{\nabla f (z_\tau)}{z_\tau - x} + g(z_\tau) - g(x) +  h(z_\tau) - h(x)]  \\ \leq 
    D_g (G_f + L_g + L_h) \frac{1}{T^{1/3}} + D_g (G_f + L_h) \left( \frac{1}{T^{2/3}} + \frac{1}{T^{1/2}} + \frac{1}{T^{1/3}} \right), \quad  \forall x \in \dom{\phi}.
\end{multline*}

\begin{proof}
From \Cref{lem:main-lemma-nonconvex-TOS}, we know
\begin{align*} 
\mathbb{E}_\tau [\ip{\nabla f(z_\tau)}{x_\tau - x} + g(z_\tau) - g(x) + h(x_\tau) - h(x)] + \frac{1}{2\gamma} \mathbb{E}_\tau [ \norm{x_\tau - z_\tau}]^2
& \leq \frac{D_g^2}{2\gamma T}, \quad \forall x \in \dom{\phi}.
\end{align*}
Based on assumptions \emph{(iii)} and \emph{(iv)}, 
\begin{align}
\mathbb{E}_\tau[\ip{\nabla f (z_\tau)}{z_\tau - x} & + g(z_\tau) - g(x) +  h(z_\tau) -  h(x)]  + \frac{1}{2\gamma} \mathbb{E}_\tau [ \norm{x_\tau - z_\tau} ]^2 \notag \\ 
& \leq \frac{D_g^2}{2\gamma T} + \mathbb{E}_\tau [\ip{\nabla f (z_\tau)}{z_\tau - x_\tau} + h(z_\tau) - h(x_\tau)] \notag \\ 
& \leq \frac{D_g^2}{2\gamma T} + (G_f + L_h) \, \mathbb{E}_\tau [\norm{z_\tau - x_\tau}],
\quad \forall x \in \dom{\phi}. \label{eqn:proof-theorem1-step1}
\end{align}
At the same time, from assumptions \emph{(i)}, \emph{(ii)}, \emph{(iii)} and \emph{(iv)}, we have
\begin{align}
\mathbb{E}_\tau [\ip{\nabla f (z_\tau)}{z_\tau - x} + g(z_\tau) - g(x) +  h(z_\tau) - h(x)] 
& \geq -(G_f + L_g + L_h)\mathbb{E}_\tau [\norm{z_\tau - x}] \notag \\[0.5em]
& \geq -(G_f + L_g + L_h) D_g,
\quad \forall x \in \dom{\phi}. \label{eqn:proof-theorem1-step2}
\end{align}
Combining \eqref{eqn:proof-theorem1-step1} and \eqref{eqn:proof-theorem1-step2}, we obtain the following second order inequality of $\mathbb{E}_\tau[\norm{x_\tau - z_\tau}]$:
\begin{align*}
 \frac{1}{2\gamma} \mathbb{E}_\tau [ \norm{x_\tau - z_\tau} ]^2 
- (G_f + L_h) \, \mathbb{E}_\tau[\norm{z_\tau - x_\tau}] -  \frac{D_g^2}{2\gamma T} - (G_f + L_g + L_h) D_g
\leq 0.
\end{align*}
Solving this inequality leads to the following bound:
\begin{align*}
\mathbb{E}_\tau [\norm{z_\tau - x_\tau}]
\leq 2 \gamma (G_f + L_h) + \frac{D_g}{T^{1/2}} + \sqrt{2 \gamma (G_f + L_g + L_h) D_g}.
\end{align*}
After substituting the step-size $\gamma = \frac{D_g}{2 (G_f + L_g + L_h) T^{2/3}}$, this becomes
\begin{align*}
\mathbb{E}_\tau[\norm{z_\tau - x_\tau}]
\leq D_g \left( \frac{1}{T^{2/3}} + \frac{1}{T^{1/2}} + \frac{1}{T^{1/3}} \right).
\end{align*}
We complete the proof by substituting this inequality back into \eqref{eqn:proof-theorem1-step1}. 
\end{proof}

\textbf{\Cref{thm:TOS-indicator-Lipschitz}.}
Consider the model problem \eqref{eqn:model-problem} under the following assumptions:\\[0.5em]
\emph{(i)}~$g$ is the indicator function of convex closed bounded set $\mathcal{G} \subseteq \R^n$ with diameter $D_\mathcal{G} := \sup_{x,y \in \mathcal{G}} \norm{x-y}$. \\[0.25em]
\emph{(ii)}~The gradient of $f$ is bounded by $G_f$ in the domain of $g$,  \emph{i.e.}, $\norm{\nabla f(x)} \leq G_f, ~ \forall x \in \dom{g}$. \\[0.25em]
\emph{(iii)}~$h$ is $L_h$-Lipschitz continuous on $\R^n$, \emph{i.e.}, $ h(x) - h(y) \leq L_h \norm{x-y}, ~ \forall x, y \in \R^n.$ \\[0.5em]
Choose $y_1 \in \mathcal{G}$. 
Then, $z_\tau \in \mathcal{G}$ returned by TOS after $T$ iterations with the fixed step-size $\gamma = \frac{D_\mathcal{G}}{2 (G_f + L_h) T^{2/3}}$ satisfies
\begin{align*}
    \mathbb{E}_{\tau}[\ip{\nabla f (z_\tau)}{z_\tau - x} + h(z_\tau) - h(x)]  \leq D_g (G_f + L_h) \left( \frac{1}{T^{2/3}} + \frac{1}{T^{1/2}} + \frac{2}{T^{1/3}} \right), \quad  \forall x \in \mathcal{G}.
\end{align*}

\begin{proof} Corollary~1 is a direct consequence of Theorem~1 with 
 $\dom{g} = \dom{\phi} = \mathcal{G}$. The assumptions \emph{(i)} and \emph{(ii)} from Theorem~1 hold with $D_g = D_\mathcal{G}$ and $L_g = 0$. We can set $g(z_\tau) = g(x) = 0$ because both $z_\tau$ and $x$ are in $\mathcal{G}$.
\end{proof}

\textbf{\Cref{thm:TOS-indicator}.}
Consider the model problem \eqref{eqn:model-problem-constraint} under the following assumptions:\\[0.5em]
\emph{(i)}~$\mathcal{G} \subseteq \R^n$ is a bounded closed convex set with diameter $D_\mathcal{G} := \sup_{x,y \in \mathcal{G}} \norm{x-y}$. \\[0.25em]
\emph{(ii)}~The gradient of $f$ is bounded by $G_f$ on $\mathcal{G}$,  \emph{i.e.}, $\norm{\nabla f(x)} \leq G_f, ~ \forall x \in \mathcal{G}$. \\[0.25em]
\emph{(iii)}~$\mathcal{H} \subseteq \R^n$ is a closed convex set. \\[0.5em]
Choose $y_1 \in \mathcal{G}$. 
Then, $z_\tau$ returned by TOS after $T$ iterations with the fixed step-size $\gamma_t = \frac{D_\mathcal{G}}{2 G_f T^{2/3}}$ satisfies:
\begin{align*}
\mathbb{E}_\tau [\dist(z_\tau, \mathcal{H})] & \leq D_\mathcal{G} \left(  \frac{1}{T^{2/3}} + \frac{1}{T^{1/2}} + \frac{1}{T^{1/3}} \right), \\
\mathbb{E}_\tau [\ip{\nabla f (z_\tau)}{z_\tau - x}] & \leq G_f D_\mathcal{G} \left( \frac{1}{T^{2/3}} + \frac{1}{T^{1/2}}  + \frac{2}{T^{1/3}} \right), \quad \forall x \in \mathcal{G} \cap \mathcal{H}.
\end{align*}

\begin{proof}
The proof is similar to the proof of \Cref{thm:TOS-Lipschitz}. We present the details for completeness. 

Since $x_\tau \in \mathcal{H}$, $z_\tau \in \mathcal{G}$, and $x \in \mathcal{G} \cap \mathcal{H}$, we can simplify \Cref{lem:main-lemma-nonconvex-TOS} as 
\begin{align*} 
\mathbb{E}_\tau [\ip{\nabla f(z_\tau)}{x_\tau - x} ] + \frac{1}{2\gamma} \mathbb{E}_\tau [ \norm{x_\tau - z_\tau}]^2
\leq \frac{D_\mathcal{G}^2}{2\gamma T}, \quad \forall x \in \mathcal{G} \cap \mathcal{H}.
\end{align*}
Using the Cauchy-Schwartz inequality and assumption \emph{(ii)}, we get
\begin{align} \label{eqn:proof-theorem2-step1}
\mathbb{E}_\tau [\ip{\nabla f (z_\tau)}{z_\tau - x}] + \frac{1}{2\gamma} \mathbb{E} [ \norm{x_\tau - z_\tau}]^2
& \leq \frac{D_\mathcal{G}^2}{2\gamma T} +  \mathbb{E}_\tau[\ip{\nabla f (z_\tau)}{z_\tau - x_\tau}] \notag \\
& \leq \frac{D_\mathcal{G}^2}{2\gamma T} +  \mathbb{E}_\tau[\norm{\nabla f (z_\tau)} \norm{z_\tau - x_\tau}] \notag \\
& \leq \frac{D_\mathcal{G}^2}{2\gamma T} +  G_f \mathbb{E}_\tau[\norm{z_\tau - x_\tau}], \quad \forall x \in \mathcal{G} \cap \mathcal{H}.
\end{align}
At the same time, from the assumptions \emph{(i)} and \emph{(ii)}, we have
\begin{align} \label{eqn:proof-theorem2-step2}
\mathbb{E}_\tau [\ip{\nabla f (z_\tau)}{z_\tau - x}] 
\geq - \mathbb{E}_\tau [\norm{\nabla f (z_\tau)}\norm{z_\tau - x}] 
\geq - G_f D_\mathcal{G},
\quad \forall x \in \mathcal{G} \cap \mathcal{H}. 
\end{align}
Combining \eqref{eqn:proof-theorem2-step1} and \eqref{eqn:proof-theorem2-step2}, we obtain the following second order inequality of $\mathbb{E}_\tau[\norm{x_\tau - z_\tau}]$:
\begin{align*}
 \frac{1}{2\gamma} \mathbb{E}_\tau [ \norm{x_\tau - z_\tau} ]^2 
- G_f \mathbb{E}_\tau[\norm{z_\tau - x_\tau}] -  \frac{D_\mathcal{G}^2}{2\gamma T} - G_f D_\mathcal{G}
\leq 0.
\end{align*}
Solving this inequality leads to the following bound:
\begin{align*}
\mathbb{E}_\tau [\norm{z_\tau - x_\tau}] 
\leq 2 G_f \gamma + \frac{D_\mathcal{G}}{T^{1/2}} + \sqrt{2\gamma G_f D_\mathcal{G}}.
\end{align*}
After substituting the step-size $\gamma = \frac{D_\mathcal{G}}{2 G_f T^{2/3}}$, this becomes
\begin{align} \label{eqn:proof-theorem2-step3}
\mathbb{E}_\tau[\norm{z_\tau - x_\tau}]
\leq D_\mathcal{G} \left( \frac{1}{T^{2/3}} + \frac{1}{T^{1/2}} + \frac{1}{T^{1/3}} \right).
\end{align}
By definition, $\mathbb{E}_\tau [\dist(z_\tau, \mathcal{H})] = \mathbb{E}_\tau [ \inf_{x \in \mathcal{H}} \norm{z_\tau - x}]  \leq \mathbb{E}_\tau [\norm{z_\tau - x_\tau}] $. 
Hence, we obtained the desired bound on the infeasibility error. 
Finally, the bound on the nonstationarity error follows by substituting \eqref{eqn:proof-theorem2-step3} into \eqref{eqn:proof-theorem2-step1}. 
\end{proof}

\section{Proof of \Cref{thm:three-operator-splitting-stochastic} and \Cref{thm:TOS-stochastic-indicator}}

\textbf{\Cref{thm:three-operator-splitting-stochastic}.}
Consider the model problem \eqref{eqn:model-problem-stochastic} under the following assumptions:\\[0.5em]
\emph{(i)}~The domain of $g$ is bounded with diameter $D_g$, \emph{i.e.}, $ \norm{x-y} \leq D_g, ~ \forall x, y \in \dom{g}. $ \\[0.25em]
\emph{(ii)}~$g$ is $L_g$-Lipschitz continuous on its domain, \emph{i.e.}, $g(x) - g(y) \leq L_g \norm{x-y}, ~ \forall x, y \in \dom{g}. $ \\[0.25em]
\emph{(iii)}~The gradient of $f$ is bounded by $G_f$ on the domain of $g$,  \emph{i.e.}, $\norm{\nabla f(x)} \leq G_f, ~ \forall x \in \dom{g}$. \\[0.25em]
\emph{(iv)}~$h$ is $L_h$-Lipschitz continuous on $\R^n$, \emph{i.e.}, $ h(x) - h(y) \leq L_h \norm{x-y}, ~ \forall x, y \in \R^n$. \\[0.25em]
\emph{(v)}~$\nabla \tilde{f} (x,\xi)$ is an unbiased estimator of $\nabla f(x)$, \emph{i.e.}, $\mathbb{E}_\xi [\nabla \tilde{f}(x,\xi)] = \nabla f(x), ~ \forall x \in \R^n$. \\[0.25em]
\emph{(vi)}~$\nabla \tilde{f} (x,\xi)$ has bounded variance, \emph{i.e.}, there exists $\sigma < +\infty$ such that $\mathbb{E}_\xi [\norm{\nabla \tilde{f}(x,\xi) - \nabla f(x)}^2] \leq \sigma^2, ~ \forall x \in \R^n.$ \\[0.5em]
Choose $y_1 \in \dom{g}$. 
Use TOS with the stochastic gradient estimator
\begin{equation*}
    u_t := \frac{1}{|Q_t|} \sum_{\xi \in Q_t} \nabla \tilde{f}(z_t,\xi), \end{equation*}
where $Q_t$ is a set of $|Q_t|$ \textit{i.i.d.} realizations from distribution $\mathcal{P}$. 
Use the fixed mini-batch size $|Q_t| = |Q| = \Big\lceil \frac{T^{2/3}}{2(G_f + L_g + L_h)^2} \Big\rceil$, and the fixed step-size 
$\gamma_t = \gamma  = \frac{D_g}{2 (G_f + L_g + L_h) T^{2/3}}$ where $T$ is the total number of iterations. 
Then, $z_\tau$ returned by the algorithm satisfies:
\begin{multline*} 
        \mathbb{E}_\tau \mathbb{E}[ \ip{\nabla f(z_t)}{z_t - x} + g(z_t) - g(x) + h(z_t) - h(x) ] \\
\leq D_g (G_f + L_g + L_h) \frac{2 + \sigma^2}{T^{1/3}} + D_g (G_f + L_h) \left(\frac{2}{T^{2/3}} + \frac{\sqrt{4+2\sigma^2}}{T^{1/2}} + \frac{\sqrt{2}}{T^{1/3}}\right), \quad \forall x \in \dom{\phi}.
\end{multline*}

\begin{proof}
The proof follows similarly to the proof of \Cref{thm:TOS-Lipschitz} but we need to take care of the noise and variance terms. 
From \Cref{lem:main-lemma-nonconvex-TOS}, we have for all $x \in \dom{\phi}$, 
\begin{align} \label{eqn:proof-theorem3-step1}
\frac{1}{T} \sum_{t=1}^T \left( \ip{u_t}{x_t - x} + g(z_t) - g(x) + h(x_t) - h(x) + \frac{1}{2\gamma} \norm{x_t - z_t}^2  \right)
& \leq \frac{D_g^2}{2\gamma T}.
\end{align}
We first focus on the inner product term. Let us decompose it as
\begin{align*} 
\ip{u_t}{x_t - x}
= \ip{\nabla f(z_t)}{z_t - x} + \ip{\nabla f(z_t)}{x_t - z_t} + \ip{u_t - \nabla f(z_t)}{z_t - x} + \ip{u_t - \nabla f(z_t)}{x_t - z_t}.
\end{align*}
Using the Cauchy-Schwartz inequality, Young's inequality, and the assumptions \emph{(i)} and \emph{(iii)}, we get
\begin{align*} 
\ip{u_t}{x_t - x}
& \geq \ip{\nabla f(z_t)}{z_t - x} - G_f \norm{x_t - z_t} - \frac{1}{\alpha} \norm{u_t - \nabla f(z_t)}^2 - \frac{\alpha}{2} \norm{z_t - x}^2 - \frac{\alpha}{2} \norm{z_t - x_t}^2 \\
& \geq \ip{\nabla f(z_t)}{z_t - x} - G_f \norm{x_t - z_t} - \frac{1}{\alpha}\norm{u_t - \nabla f(z_t)}^2  - \frac{\alpha}{2} D_g^2 - \frac{\alpha}{2} \norm{z_t - x_t}^2, \quad \forall \alpha > 0.
\end{align*}
We take the expectation of both sides (over $Q_t$) and use \Cref{lem:variance-lemma} to get, for all $\alpha > 0$,
\begin{align} \label{eqn:proof-theorem3-step2}
\mathbb{E} [\ip{u_t}{x_t - x}]
& \geq \mathbb{E} [\ip{\nabla f(z_t)}{z_t - x}] - G_f \mathbb{E} [\norm{x_t - z_t}] - \frac{\sigma^2}{\alpha|Q|} - \frac{\alpha}{2} D_g^2 - \frac{\alpha}{2} \mathbb{E}[\norm{z_t - x_t}^2].
\end{align}
Now, we take the expectation of \eqref{eqn:proof-theorem3-step1} and substitute \eqref{eqn:proof-theorem3-step2} into this inequality. 
This leads to 
\begin{align} \label{eqn:proof-theorem3-theorem4-anchor}
        \frac{1}{T} \sum_{t=1}^T \mathbb{E} \Big[ \ip{\nabla f(z_t)}{z_t - x} + g(z_t) - g(x) & + h(x_t) - h(x)  + \Big(\frac{1}{2\gamma} - \frac{\alpha}{2} \Big) \norm{x_t - z_t}^2 - G_f \norm{x_t - z_t} \Big] \notag \\
 & \leq \frac{D_g^2}{2\gamma T} + \frac{\alpha D_g^2}{2} + \frac{\sigma^2}{\alpha |Q| }, \quad \forall x \in \dom{\phi}, ~ \forall \alpha > 0.
\end{align}
Based on the assumption \emph{(iv)}, we obtain 
\begin{multline} \label{eqn:stochastic-summation}
        \frac{1}{T} \sum_{t=1}^T \mathbb{E} \Big[ \ip{\nabla f(z_t)}{z_t - x} + g(z_t) - g(x) + h(z_t) - h(x) + \Big(\frac{1}{2\gamma} - \frac{\alpha}{2} \Big) \norm{x_t \!-\! z_t}^2 - (G_f + L_h) \norm{x_t \! - \! z_t} \Big] \\
 \leq \frac{D_g^2}{2\gamma T} + \frac{\alpha D_g^2}{2} + \frac{\sigma^2}{\alpha |Q| }, \quad \forall x \in \dom{\phi}, ~  \forall \alpha > 0.
\end{multline}
At the same time, based on the assumptions \emph{(i)}, \emph{(ii)}, \emph{(iii)}, and \emph{(iv)}, we have
\begin{equation} \label{eqn:proof-theorem3-step}
\ip{\nabla f(z_t)}{z_t - x} + g(z_t) - g(x) + h(z_t) - h(x) \geq - (G_f + L_g + L_h) D_g,  \quad \forall x \in \dom{\phi}. 
\end{equation}
Substituting \eqref{eqn:proof-theorem3-step} back into \eqref{eqn:stochastic-summation} leads to 
\begin{align*}
        \frac{1}{T} \sum_{t=1}^T \mathbb{E} \Big[ \Big(\frac{1}{2\gamma} - \frac{\alpha}{2} \Big) \norm{x_t - z_t}^2 - (G_f + L_h) \norm{x_t - z_t} \Big] 
 \leq \frac{D_g^2}{2\gamma T} + \frac{\alpha D_g^2}{2} + \frac{\sigma^2}{\alpha |Q|} + (G_f + L_g + L_h) D_g,
\end{align*}
for all $\alpha > 0$. We choose $\tau$ uniformly random over $1,2,\ldots, T$. Hence, 
by definition of the expectation over $\tau$, we have $\forall \alpha > 0$,
\begin{align*}
        \Big(\frac{1}{2\gamma} - \frac{\alpha}{2} \Big) \mathbb{E}_\tau \mathbb{E}[\norm{x_\tau \!-\! z_\tau}^2] - (G_f \!+\! L_h) \mathbb{E}_\tau \mathbb{E}[ \norm{x_\tau \!-\! z_\tau} ] 
 \leq \frac{D_g^2}{2\gamma T} + \frac{\alpha D_g^2}{2} + \frac{\sigma^2}{\alpha |Q|} + (G_f \!+\! L_g \!+\! L_h) D_g.
\end{align*}
Choose $\alpha$ such that $\alpha  \leq \frac{1}{\gamma}$. Note that, $\mathbb{E}_\tau \mathbb{E}[\norm{x_\tau - z_\tau}]^2 \leq \mathbb{E}_\tau \mathbb{E}[\norm{x_\tau - z_\tau}^2]$. Therefore, 
\begin{align*}
        \Big(\frac{1}{2\gamma} - \frac{\alpha}{2} \Big) \mathbb{E}_\tau \mathbb{E}[\norm{x_\tau \!-\! z_\tau}]^2 - (G_f \!+\! L_h) \mathbb{E}_\tau \mathbb{E}[ \norm{x_\tau \!-\! z_\tau} ] 
 \leq \frac{D_g^2}{2\gamma T} + \frac{\alpha D_g^2}{2} + \frac{\sigma^2}{\alpha |Q|} + (G_f \!+\! L_g \!+\! L_h) D_g.
\end{align*}
We choose $\gamma = \frac{D_g}{2 (G_f + L_g + L_h) T^{2/3}}$, $\alpha =\frac{2(G_f + L_g + L_h)}{D_g T^{1/3}}$, and $|Q| = \lceil \frac{T^{2/3}}{2(G_f + L_g + L_h)^2} \rceil$. This leads to
\begin{equation*}
       \Big( \frac{T-1}{T} \Big) \mathbb{E}_\tau \mathbb{E}[\norm{x_\tau - z_\tau}]^2 - \frac{D_g}{T^{2/3}} \mathbb{E}_\tau \mathbb{E}[ \norm{x_\tau - z_\tau} ] 
 \leq D_g^2 \left( \frac{2}{T} + \frac{\sigma^2}{T} +  \frac{1}{T^{2/3}} \right).
\end{equation*}
By solving this inequality, we obtain, for any $T \geq2$, 
\begin{equation}\label{eqn:proof-theorem3-step3}
\mathbb{E}_\tau \mathbb{E}[\norm{x_\tau - z_\tau}] 
\leq D_g \left(\frac{T^{1/3}}{T-1} + \sqrt{\frac{2+\sigma^2}{T-1} + \frac{T^{1/3}}{T-1}}\right)
\leq D_g \left(\frac{2}{T^{2/3}} + \frac{\sqrt{4+2\sigma^2}}{T^{1/2}} + \frac{\sqrt{2}}{T^{1/3}}\right).
\end{equation}
Combining this bound back again with \eqref{eqn:stochastic-summation}, we get
\begin{align*} 
        \mathbb{E}_\tau \mathbb{E}[ \ip{\nabla f(z_t)}{z_t - x} \!+\! g(z_t) \!-\! g(x) \!+\! h(z_t) \!-\! h(x) ] 
 \leq \frac{D_g^2}{2\gamma T} \!+\! \frac{\alpha D_g^2}{2} \!+\! \frac{\sigma^2}{\alpha |Q| } + (G_f + L_h) \mathbb{E}_\tau \mathbb{E} [\norm{x_t - z_t} ] \\
\leq D_g (G_f + L_g + L_h) \frac{2 + \sigma^2}{T^{1/3}} + D_g (G_f + L_h) \left(\frac{2}{T^{2/3}} + \frac{\sqrt{4+2\sigma^2}}{T^{1/2}} + \frac{\sqrt{2}}{T^{1/3}}\right), \quad \forall x \in \dom{\phi}.
\end{align*}
This completes the proof. 
\end{proof}

\textbf{\Cref{thm:TOS-stochastic-indicator}.}
Consider the model problem \eqref{eqn:model-problem-constraint-stochastic} under the following assumptions: \\[0.5em]
\emph{(i)}~$\mathcal{G} \subseteq \R^n$ is a bounded closed convex set with diameter $D_\mathcal{G} := \sup_{x,y \in \mathcal{G}} \norm{x-y}$. \\[0.25em]
\emph{(ii)}~The gradient of $f$ is bounded by $G_f$ on $\mathcal{G}$,  \emph{i.e.}, $\norm{\nabla f(x)} \leq G_f, ~ \forall x \in \mathcal{G}$. \\[0.25em]
\emph{(iii)}~$\mathcal{H} \subseteq \R^n$ is a closed convex set. \\[0.25em]
\emph{(iv)}~$\nabla \tilde{f} (x,\xi)$ is an unbiased estimator of $\nabla f(x)$, \emph{i.e.}, $\mathbb{E}_\xi [\nabla \tilde{f}(x,\xi)] = \nabla f(x), ~ \forall x \in \R^n$. \\[0.25em]
\emph{(v)}~$\nabla \tilde{f} (x,\xi)$ has bounded variance, \emph{i.e.}, there exists $\sigma < +\infty$ such that $\mathbb{E}_\xi [\norm{\nabla \tilde{f}(x,\xi) - \nabla f(x)}^2] \leq \sigma^2, ~ \forall x \in \R^n$. \\[0.5em]
Choose $y_1 \in \mathcal{G}$. 
Use TOS with the stochastic gradient estimator
\begin{equation*}
    u_t := \frac{1}{|Q_t|} \sum_{\xi \in Q_t} \nabla \tilde{f}(z_t,\xi), \end{equation*}
where $Q_t$ is a set of $|Q_t|$ \textit{i.i.d.} realizations from distribution $\mathcal{P}$. 
Use the fixed mini-batch size $|Q_t| = |Q| = \lceil \frac{T^{2/3}}{2 G_f^2} \rceil$, and the fixed step-size 
$\gamma_t = \gamma  = \frac{D_\mathcal{G}}{2 G_f T^{2/3}}$ where $T$ is the total number of iterations. 
Then, $z_\tau$ returned by the algorithm satisfies:
\begin{align*}
\mathbb{E}_\tau \mathbb{E} [\dist(z_\tau, \mathcal{H})] & \leq  D_\mathcal{G} \left( \frac{2}{T^{2/3}} + \frac{\sqrt{4+2\sigma^2}}{T^{1/2}} + \frac{\sqrt{2}}{T^{1/3}} \right). \\[0.25em]
\mathbb{E}_\tau \mathbb{E} [\ip{\nabla f (z_\tau)}{z_\tau - x}] 
& \leq G_f D_\mathcal{G} \left( \frac{2}{T^{2/3}} + \frac{\sqrt{4+2\sigma^2}}{T^{1/2}} + \frac{2 + \sqrt{2} + \sigma^2}{T^{1/3}} \right), \quad \forall x \in \mathcal{G} \cap \mathcal{H}.
\end{align*}

\begin{proof}
The proof of \Cref{thm:TOS-stochastic-indicator}  follows similarly to the proof of \Cref{thm:three-operator-splitting-stochastic} until \eqref{eqn:proof-theorem3-theorem4-anchor}.
Since $x_\tau \in \mathcal{H}$, $z_\tau \in \mathcal{G}$, and $x \in \mathcal{G} \cap \mathcal{H}$, we can simplify  \eqref{eqn:proof-theorem3-theorem4-anchor} as, $\forall x \in \mathcal{G} \cap \mathcal{H}, ~ \forall \alpha > 0$,
\begin{align} \label{eqn:theorem4-step1}
        \frac{1}{T} \sum_{t=1}^T \mathbb{E} \Big[ \ip{\nabla f(z_t)}{z_t - x} + \Big(\frac{1}{2\gamma} - \frac{\alpha}{2} \Big) \norm{x_t - z_t}^2 - G_f \norm{x_t - z_t} \Big] 
 \leq \frac{D_\mathcal{G}^2}{2\gamma T} + \frac{\alpha D_\mathcal{G}^2}{2} + \frac{\sigma^2}{\alpha |Q| }.
\end{align}
From the assumptions \emph{(i)} and \emph{(ii)}, we have $\ip{\nabla f (z_t)}{z_t - x} 
\geq - G_f D_\mathcal{G}$.
By definition of the expectation over $\tau$, we get,
\begin{align*}
        \Big(\frac{1}{2\gamma} - \frac{\alpha}{2} \Big) \mathbb{E}_\tau \mathbb{E}[\norm{x_\tau - z_\tau}^2] - G_f \mathbb{E}_\tau \mathbb{E}[ \norm{x_\tau - z_\tau} ] 
 \leq \frac{D_\mathcal{G}^2}{2\gamma T} + \frac{\alpha D_\mathcal{G}^2}{2} + \frac{\sigma^2}{\alpha |Q|} + G_f D_\mathcal{G}, \quad \forall \alpha > 0.
\end{align*}
By substituting $|Q| = \lceil \frac{T^{2/3}}{2 G_f^2} \rceil$ and $ = \gamma  = \frac{D_\mathcal{G}}{2 G_f T^{2/3}}$, and choosing $\alpha =\frac{2G_f}{D_\mathcal{G} T^{1/3}}$, we get
\begin{equation*}
          \Big( \frac{T-1}{T} \Big) \mathbb{E}_\tau \mathbb{E}[\norm{x_\tau - z_\tau}]^2 - \frac{D_\mathcal{G}}{T^{2/3}} \mathbb{E}_\tau \mathbb{E}[ \norm{x_\tau - z_\tau} ] 
 \leq D_\mathcal{G}^2 \left( \frac{2}{T} + \frac{\sigma^2}{T} +  \frac{1}{T^{2/3}} \right).
\end{equation*}
Solving this inequality for $\mathbb{E}_\tau \mathbb{E}[\norm{x_\tau - z_\tau}]$ yields
\begin{equation} \label{eqn:theorem4-infeasibility}
        \mathbb{E}_\tau \mathbb{E}[\norm{x_\tau - z_\tau}]
\leq D_\mathcal{G} \left(\frac{T^{1/3}}{T-1} + \sqrt{\frac{2+\sigma^2}{T-1} + \frac{T^{1/3}}{T-1}}\right)
\leq D_\mathcal{G} \left(\frac{2}{T^{2/3}} + \frac{\sqrt{4+2\sigma^2}}{T^{1/2}} + \frac{\sqrt{2}}{T^{1/3}}\right).
\end{equation}
By definition, $\mathbb{E}_\tau \mathbb{E} [\dist(z_\tau, \mathcal{H})] \leq \mathbb{E}_\tau \mathbb{E}[\norm{x_\tau - z_\tau}]$.

To obtain the desired bound on the nonstationarity error, we substitute \eqref{eqn:theorem4-infeasibility} into \eqref{eqn:theorem4-step1}:
\begin{align*} 
        \mathbb{E}_\tau \mathbb{E} [ \ip{\nabla f(z_\tau)}{z_\tau - x} ] 
& \leq \frac{D_\mathcal{G}^2}{2\gamma T} + \frac{\alpha D_\mathcal{G}^2}{2} + \frac{\sigma^2}{\alpha |Q| } + G_f \mathbb{E}_\tau \mathbb{E} [\norm{x_t - z_t}], \\[0.5em]
& \leq G_f D_\mathcal{G} \left( \frac{2}{T^{2/3}} + \frac{\sqrt{4+2\sigma^2}}{T^{1/2}} + \frac{2 + \sqrt{2} + \sigma^2}{T^{1/3}} \right), \quad \forall x \in \mathcal{G} \cap \mathcal{H}.
\end{align*}
This completes the proof.
\end{proof}

\section{Additional Details on the Numerical Experiments}

This section presents more details about the QAP experiments in \Cref{sec:experiments}. 

\subsection{On the Projection onto $\mathcal{H}$ for \eqref{eqn:split2}}

We consider the projection onto $\mathcal{H}:= \{ X \in \R^{n \times n} : X 1_n = 1_n, X^\top 1_n = 1_n \}$. 
Below, we present the derivation of the closed form formula for this projection from \citep{zass2006doubly,lu2016fast}. 

We can formulate this problem as
 \begin{align}
 \label{eqn:projection-to-x1nxt1n1n}
 \begin{aligned}
 & \min_{X \in \R^{n \times n}} & & \frac{1}{2} \norm{X_\tau - X}_F^2 \\
 & \mathrm{subj.~to} & & X1_n = 1_n, \quad X^\top1_n = 1_n.
 \end{aligned}
 \end{align}
We use the Karush-Kuhn-Tucker (KKT) optimality conditions. 
We can write the Lagrangian of \eqref{eqn:projection-to-x1nxt1n1n} as
\begin{align*}
\mathcal{L}(X,\mu_1, \mu_2) = \frac{1}{2} \norm{X_\tau - X}_F^2 + \ip{\mu_1}{X1_n - 1_n} + \ip{\mu_2}{X^\top1_n - 1_n}.
\end{align*}
Then, the Lagrangian stationarity condition yields
\begin{align} \label{eqn:lagrangian-derivative}
\nabla_X \mathcal{L}(X,\mu_1, \mu_2) = X - X_\tau + \mu_1 1_n^\top + 1_n \mu_2^\top .
\end{align}
Our goal is to find a triplet $(X, \mu_1, \mu_2) \in \R^{n \times n} \times \R^n \times \R^n$ that satisfies
\begin{equation}\label{eqn:kkt}
\nabla_X \mathcal{L}(X,\mu_1, \mu_2) = 0, \quad  X 1_n = 1_n ,  \quad  \text{and} \quad  X^\top 1_n  = 1_n .
\end{equation}
Based on \eqref{eqn:lagrangian-derivative}, the first condition (Lagrangian stationarity) leads to
\begin{align} \label{eqn:lagrangian-stationarity} 
X_\tau  = X + \mu_1 1_n^\top + 1_n \mu_2^\top.
\end{align}
We multiply this by $1_n$ from the right, and by $1_n^\top$ from the left:
\begin{align} 
X_\tau 1_n  
 = X 1_n + \mu_1 1_n^\top 1_n + 1_n \mu_2^\top 1_n = 1_n + n \mu_1 + 1_n 1_n^\top \mu_2, \label{eqn:kkt-2}
\end{align}
where we also used the feasibility condition in \eqref{eqn:kkt}. 

Let us try to find a solution with $\mu_1 = \mu_2 = \mu$. Under this assumption, \eqref{eqn:kkt-2} becomes
\begin{align} \label{eqn:kkt-mu-1}
X_\tau 1_n  
 = 1_n + (nI + 1_n 1_n^\top) \mu 
 \quad \iff \quad 
 \mu = (nI + 1_n 1_n^\top) ^{-1} (X_\tau- I) 1_n.
\end{align}
We can do the inversion by using the Sherman–Morrison formula:
\begin{align} \label{eqn:sherman-morrison}
 (nI + 1_n 1_n^\top) ^{-1} = \frac{1}{n} I - \frac{1}{2n^2} 1_n 1_n^\top.
\end{align}
By substituting \eqref{eqn:sherman-morrison} back into \eqref{eqn:kkt-mu-1}, we get
\begin{align} \label{eqn:kkt-mu-2} 
 \mu  = \left( \frac{1}{n} I - \frac{1}{2n^2} 1_n 1_n^\top \right) (X_\tau- I) 1_n  
 = \frac{1}{n} X_\tau 1_n - \frac{1}{2n^2} 1_n 1_n^\top X_\tau 1_n - \frac{1}{2n} 1_n.
\end{align}
Finally, we get $X$ by replacing \eqref{eqn:kkt-mu-2} into \eqref{eqn:lagrangian-stationarity} and rearranging the order:
\begin{align*} 
X   & = X_\tau - \left(\frac{1}{n} X_\tau 1_n - \frac{1}{2n^2} 1_n 1_n^\top X_\tau 1_n - \frac{1}{2n} 1_n\right) 1_n^\top - 1_n \left(\frac{1}{n} X_\tau 1_n - \frac{1}{2n^2} 1_n 1_n^\top X_\tau 1_n - \frac{1}{2n} 1_n\right)^\top  \\
& = X_\tau - \frac{1}{n} X_\tau 1_n 1_n^\top + \frac{1_n^\top X_\tau 1_n}{n^2} 1_n  1_n^\top  - \frac{1}{n}1_n1_n^\top X_\tau^\top  +  \frac{1}{n} 1_n 1_n^\top  .
\end{align*}
$(X,\mu,\mu)$ satisfy the KKT conditions in \eqref{eqn:kkt}, hence $X$ is a solution to \eqref{eqn:projection-to-x1nxt1n1n}.

\subsection{On the Rounding Procedure for QAP: Projection onto the Set of Permutation Matrices}

Suppose $X_\tau$ is a solution to \eqref{eqn:QAP-relaxed}. 
A natural rounding strategy is to find the nearest permutation matrix, \emph{i.e.}, projecting $X_\tau$ onto the set of permutation matrices. 
We can do this by solving a linear assignment problem (LAP). 
Below, we present a proof for this classical result. 

The projection onto the set of permutation matrices can be formulated as
 \begin{align}
 \label{eqn:projection-to-permutation}
 \begin{aligned}
 & \min_{X \in \R^{n \times n}} & & \frac{1}{2} \norm{X_\tau - X}_F^2 \\
 & \mathrm{subj.~to} & & X \in \{0,1\}^{n \times n}, ~~ X1_n = X^\top1_n = 1_n, 
 \end{aligned}
 \end{align}
where $I$ denotes the $n \times n$ identity matrix and $1_n$ is the $n$-dimensional vector of ones. 

Recall that a permutation matrix has precisely one entry equal to $1$ in each row and column, and the other entries are $0$. 
Hence, $\norm{X}_F= n$ for all permutation matrices. Then, 
 \begin{align*}
 \norm{X_\tau - X}_F^2
 & = \norm{X_\tau}_F^2 - 2 \ip{X_\tau}{X} + \norm{X}_F^2 = \norm{X_\tau}_F^2 - 2 \ip{X_\tau}{X} + n^2.
 \end{align*}
 Therefore, \eqref{eqn:projection-to-permutation} is equivalent (in the sense that their solution sets are the same) to the following problem:
 \begin{align*}
 \begin{aligned}
 & \max_{X \in \R^{n \times n}} & & \ip{X_\tau}{X} \\
 & \mathrm{subj.~to} & & X \in \{0,1\}^{n \times n}, ~~ X1_n = X^\top1_n = 1_n.
 \end{aligned}
 \end{align*}
 Since the objective is linear, convex hull relaxation of the domain does not change the solution. Hence, the following problem is also equivalent:
\begin{align*}
\begin{aligned}
& \max_{X \in \R^{n \times n}} & & \ip{X_\tau}{X} \\
& \mathrm{subj.~to} & & X \in [0,1]^{n \times n}, ~~ X1_n = X^\top1_n = 1_n. 
\end{aligned}
\end{align*}
This is an instance of LAP. 
It can be solved in $\mathcal{O}(n^3)$ time via the the Hungarian method \citep{kuhn1955hungarian,munkres1957algorithms} or the Jonker-Volgenant algorithm \citep{jonker1987shortest}. 

\end{appendices}

\section*{Acknowledgements}

AY received support from the Early Postdoc.Mobility Fellowship P2ELP2\_187955 from Swiss National Science Foundation and partial postdoctoral support from NSF-CAREER grant IIS-1846088. SS acknowledges support from an NSF BIGDATA grant (1741341) and an NSF CAREER grant (1846088). 

\bibliography{ScalableQAP}
\bibliographystyle{icml2021}

\end{document}